\newcommand{\R}{\mathbb{R}}
\newcommand{\M}{\mathcal{M}}
\newcommand{\be}{\begin{equation}}
\newcommand{\ee}{\end{equation}}
\newcommand{\bee}{\begin{equation*}}
\newcommand{\eee}{\end{equation*}}
\newcommand{\bea}{\begin{eqnarray}}
\newcommand{\eea}{\end{eqnarray}}
\newcommand{\bess}{\begin{eqnarray*}}
\newcommand{\eess}{\end{eqnarray*}}
\numberwithin{equation}{section}
\theoremstyle{plain}
\newtheorem{Thm}{Theorem}[section]
\newtheorem{Cor}[Thm]{Corollary}
\newtheorem{Lem}[Thm]{Lemma}
\theoremstyle{definition}
\newtheorem{Def}[Thm]{Definition}
\theoremstyle{remark}
\begin{document}
\title[Solutions to Sublinear Elliptic Problems]{Minimal $L^p$-Solutions to Singular \\ Sublinear Elliptic Problems}   
\author{Aye Chan May}
\address[Aye Chan May]{School of Integrated Science and Innovation,  Sirindhorn International Institute of Technology,  Thammasat University, Thailand}
\email{\href{m6422040748@g.siit.tu.ac.th}{m6422040748@g.siit.tu.ac.th}}

\author{Adisak Seesanea}
\address[Adisak Seesanea]{School of Integrated Science and Innovation,  Sirindhorn International Institute of Technology,  Thammasat University, Thailand}
\email{\href{adisak.see@siit.tu.ac.th}{adisak.see@siit.tu.ac.th}}

\begin{abstract}
We solve the existence problem for the minimal positive solutions $u\in L^{p}(\Omega, dx)$ to the Dirichlet problems
for sublinear elliptic equations of the form
\[
\begin{cases} 
\mathcal{L}u=\sigma u^q+\mu\qquad \quad  \text{in} \quad \Omega, \\
\liminf\limits_{x \rightarrow y}u(x) = 0 \qquad y \in \partial_{\infty}\Omega,
\end{cases}
\]
where $0<q<1$ and $\mathcal{L}u:=-\text{div} (\mathcal{A}(x)\nabla u)$ is a linear uniformly elliptic operator with bounded measurable coefficients.  The coefficient $\sigma$ and data $\mu$ are nonnegative Radon measures on an arbitrary domain $\Omega \subset \R^n$ with a positive Green function associated with $\mathcal{L}$.
Our techniques are based on the use of sharp Green potential pointwise estimates, weighted norm inqualities, and norm estimates in terms of generalized energy.
\end{abstract}

\subjclass[2020]{Primary 35J61; Secondary 31B10, 42B37.} 
\keywords{sublinear elliptic equation, measure data, divergence form operator, Green function}

\maketitle
\tableofcontents

\section{Introduction}

Let $\Omega$ be a nonempty open connected set in $\R^n$ ($n \geq 3$) which possesses a positive Green function $G$, and $\mathcal{M}^+(\Omega)$ denotes the class of all nonnegative Radon measures in $\Omega.$  

We consider the Dirichlet problem 
\be \label{classicallaplacian}
\begin{cases} 
         \mathcal{L}u=\sigma u^q +\mu,\quad     u \geq 0 \quad \text{in}\quad \Omega,\\		
		\liminf \limits_{x \rightarrow  y}u(x)=0,\quad y \in \partial_{\infty} \Omega
\end{cases} 
\ee
in the sublinear case $0<q<1$ where $\sigma, \mu \in \mathcal{M}^+(\Omega)$.  

Here $\mathcal{L}u:=-\text{div} (\mathcal{A}(x)\nabla u)$  with bounded measurable coefficients is assumed to be uniformly elliptic, i.e., $\mathcal{A}:\Omega\to\R^{n\times n}$ is a real symmetric matrix-valued function and there exists positive constants $m \leq M$ so that 
\[
m |\xi|^{2} \leq \mathcal{A}(x)\xi \cdot \xi \leq M |\xi|^{2}
\]
for almost every  $x \in \Omega$ and for every $ \xi \in\mathbb{R}^n.$

In this paper, a solution $u$ to the problem \eqref{classicallaplacian} will be understood in the sense that $u$ is an $\mathcal{A}$-superharmonic function on $\Omega$ such that $u\in L^q_{loc}(\Omega,d\sigma)$ with $u\geq 0\; d\sigma$-a.e., and satisfies the corresponding integral equations
\be \label{integraleq}
u = \mathbf{G}(u^{q} d\sigma) + \mathbf{G}\mu  \quad \text{in} \;\;  \Omega.
\ee

Here, the Green potential of a measure $\sigma\in \mathcal{M}^+(\Omega),$ is defined by 
\[
\mathbf{G}\sigma=\int_\Omega G(x,y)d\sigma(y),\quad x\in\Omega
\]
where a function $G:\Omega\times\Omega\to(0,\infty]$ called a positive Green function associated with $\mathcal{L}$ in $\Omega.$

In the classical case $\mathcal{L} :=-\Delta $, these sublinear equations are closely 
related to the study of porous medium equations, and  were studied by Brezis and Kamin \cite{BK} under the assumption of the bounded domain. The reader can also see such a sublinear problem under various assumptions, for instance, \cite{BB, BBC, CV2, QV, SV2, SV1, SV3, V4}, and the literature cited there.

There are many of the existing solutions theories to elliptic equations \eqref{classicallaplacian} involving measures. For instance, V\'{e}ron \cite{Ve}, considered problem \eqref{classicallaplacian} with different boundary conditions: homogeneous Dirichlet boundary conditions ($u=0$ on $\partial\Omega$) and measure boundary conditions ($u=\mu$ on $\partial\Omega$ where $\mu$ is Radon measure) under a smooth bounded domain $\Omega.$


The homogeneous case ($\mu= 0$) of problem \eqref{classicallaplacian} was investigated by Seesanea and Verbitsky \cite{SV3}. Nevertheless, when it comes to
the case $\mu \geq 0$, the relation between $\sigma$ and $\mu$
seems to be nontrivial in the scale of Lebesgue space.

Furthermore, in \cite{V4}, the author introduced the bilateral pointwise estimates in terms of the intrinsic nonlinear potentials 
which can be utilized to obtain the existence of a positive solution $u \in L^{p}(\Omega, dx)$ to \eqref{classicallaplacian}. Unfortunately, the definition of intrinsic nonlinear potential is  defined in terms of the best localized constant of related sublinaer weighted norm inequality,   which make it difficult to be verified.

In this present paper, we aims to provide a simple approach to overcome the difficulties in \cite{SV3} and deduce useful sufficient conditions 
on measures $\sigma$ and $\mu$ for the existence of the positive minimal $\mathcal{A}$-superharmonic solution $u \in L^{p}(\Omega, dx)$ to \eqref{classicallaplacian}. 

Our main results read as follows.
\begin{Thm} \label{thm:classical}
Let $\sigma, \mu \in \mathcal{M^+}(\Omega)$  such that $(\sigma,\mu) \neq (0,0)$, $0<q<1$ and $G$ be a positive Green function associated with $\mathcal{L}$ in $\Omega\subset\mathbb{R}^n, n \geq 3$. Suppose also that $\frac{n}{n-2}<p<\infty$, 
\be \label{greenpotentialwithsigma}
\mathbf{G}\sigma\in L^{\frac{\gamma+q}{1-q}}(\Omega,d\sigma)
\ee
and 
\be \label{greenpotentialwithmu}
\mathbf{G}\mu\in L^{\gamma}(\Omega,d\mu),
\ee
 with $\gamma=\frac{p(n-2)-n}{n}.$ Then there exists a positive minimal $\mathcal{A}$-superharmonic solution $u \in L^{p}(\Omega,dx)$ to \eqref{classicallaplacian}.
\end{Thm}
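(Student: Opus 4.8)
The plan is to obtain $u$ as the increasing limit of the natural monotone iteration attached to the integral formulation \eqref{integraleq}, and to control that limit in $L^p(\Omega,dx)$ through a self-improving \emph{a priori} estimate. Concretely, set $u_0=\mathbf{G}\mu$ and $u_{j+1}=\mathbf{G}(u_j^{q}\,d\sigma)+\mathbf{G}\mu$. Because $0<q<1$, the map $v\mapsto \mathbf{G}(v^{q}\,d\sigma)+\mathbf{G}\mu$ is monotone, so $\{u_j\}$ is nondecreasing and converges pointwise to an $\mathcal{A}$-superharmonic limit $u$; the entire difficulty is to keep $u$ finite $dx$-a.e. and in $L^p$. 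I would drive the estimates with the three tools announced in the abstract: the sharp two-sided pointwise bound $\mathbf{G}\big((\mathbf{G}\sigma)^{s}\,d\sigma\big)\asymp (\mathbf{G}\sigma)^{s+1}$ for $s\ge 0$; the generalized-energy norm estimate $\|\mathbf{G}\nu\|_{L^p(\Omega,dx)}^{\gamma+1}\le C\int_\Omega(\mathbf{G}\nu)^{\gamma}\,d\nu$ valid for every $\nu\in\mathcal{M}^+(\Omega)$, with $\gamma+1=\tfrac{p(n-2)}{n}$; and a weighted norm (trace) inequality $\int_\Omega v^{\gamma+q}\,d\sigma\le \kappa\,\|v\|_{L^p(\Omega,dx)}^{\gamma+q}$ whose constant $\kappa$ is finite under \eqref{greenpotentialwithsigma}.

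The core computation closes as follows. Applying the energy estimate to $\nu=u_j^{q}\,d\sigma$ and using the pointwise domination $\mathbf{G}(u_j^{q}\,d\sigma)\le u_{j+1}$ together with monotonicity $u_j\le u_{j+1}$, one gets $\|\mathbf{G}(u_j^{q}\,d\sigma)\|_{L^p}\le C\big(\int_\Omega u_{j+1}^{\gamma+q}\,d\sigma\big)^{1/(\gamma+1)}$. Feeding the trace inequality into the right-hand side and adding the $\mu$-contribution $\|\mathbf{G}\mu\|_{L^p}=:B$, which is finite by the energy estimate applied to $\mu$ and hypothesis \eqref{greenpotentialwithmu}, yields
\[
\|u_{j+1}\|_{L^p}\le C\kappa^{1/(\gamma+1)}\,\|u_{j+1}\|_{L^p}^{\theta}+B,\qquad \theta=\tfrac{\gamma+q}{\gamma+1}.
\]
Here sublinearity is decisive: since $q<1$ we have $\theta<1$, so an inequality of the form $X\le A X^{\theta}+B$ forces a uniform bound $X\le K(A,B,\theta)$ independent of $j$. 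Monotone convergence then gives $u\in L^p(\Omega,dx)$ with $\|u\|_{L^p}\le K$, and passing to the limit in \eqref{integraleq} shows $u$ solves the integral equation, hence \eqref{classicallaplacian}.

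The main obstacle is the weighted norm inequality itself: establishing that its constant is finite precisely under \eqref{greenpotentialwithsigma}, and that it may legitimately be applied to the mixed iterates $u_{j+1}$ (which contain the non-$\sigma$-potential term $\mathbf{G}\mu$), are the delicate points where the coupling between $\sigma$ and $\mu$ flagged in the introduction must be resolved. A useful consistency check, which I would use to calibrate the exponents, is that the hypotheses are exactly what place the heuristic majorant $(\mathbf{G}\sigma)^{1/(1-q)}+\mathbf{G}\mu$ in $L^p$: indeed $\big\|(\mathbf{G}\sigma)^{1/(1-q)}\big\|_{L^p}=\|\mathbf{G}\sigma\|_{L^{p/(1-q)}}^{1/(1-q)}$, and the energy estimate used at exponent $p/(1-q)$ requires $\int_\Omega(\mathbf{G}\sigma)^{\tilde\gamma}\,d\sigma<\infty$ with $\tilde\gamma+1=\tfrac{(p/(1-q))(n-2)}{n}=\tfrac{\gamma+1}{1-q}$, i.e. $\tilde\gamma=\tfrac{\gamma+q}{1-q}$, which is precisely \eqref{greenpotentialwithsigma}. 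A secondary technical point is that dividing by $\|u_{j+1}\|_{L^p}$ requires its finiteness at each step, which I would secure inductively from $u_0=\mathbf{G}\mu\in L^p$ together with the finiteness of $\kappa$, possibly after a truncation/approximation of $\sigma$ and $\mu$ to reduce to bounded data before removing the truncation.

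Finally, I would record the qualitative conclusions. The uniform limit $u$ is $\mathcal{A}$-superharmonic as an increasing limit of such functions, lies in $L^q_{loc}(\Omega,d\sigma)$ and satisfies \eqref{integraleq}. Minimality follows by comparison: any positive solution $v$ obeys $v=\mathbf{G}(v^{q}\,d\sigma)+\mathbf{G}\mu\ge\mathbf{G}\mu=u_0$, whence $v\ge u_j$ for all $j$ by induction and monotonicity, so $v\ge u$. Positivity (nontriviality) is guaranteed by $(\sigma,\mu)\neq(0,0)$: if $\mu\neq0$ then $u\ge\mathbf{G}\mu>0$, while if $\mu=0$ one initializes the iteration instead at a subsolution $\varepsilon(\mathbf{G}\sigma)^{1/(1-q)}$, whose subsolution property comes from the lower pointwise bound $\mathbf{G}\big((\mathbf{G}\sigma)^{q/(1-q)}\,d\sigma\big)\ge c_0(\mathbf{G}\sigma)^{1/(1-q)}$ for $\varepsilon$ small, recovering the homogeneous case in agreement with \cite{SV3}.
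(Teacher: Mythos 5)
Your iteration-plus-sublinearity scheme is sound in outline, but it is closed by an inequality that you never establish and that cannot hold in the generality the theorem requires: the trace inequality $\int_\Omega v^{\gamma+q}\,d\sigma \le \kappa\,\|v\|^{\gamma+q}_{L^p(\Omega,dx)}$ (your third tool). Here $\sigma$ is an arbitrary Radon measure; if $\sigma$ is carried by a set of Lebesgue measure zero, the left-hand side is not even determined by the $L^p(dx)$ equivalence class of $v$, so the inequality must at least be restricted to canonical representatives of potentials, and in that restricted form it is a genuine quantitative claim that does not obviously follow from the purely integral hypothesis \eqref{greenpotentialwithsigma} (a scaling analysis on measures concentrated on small balls shows the two conditions are exactly critical with respect to each other, so the implication is delicate at best). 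You flag this yourself as ``the main obstacle,'' but no argument is offered, and the whole self-improving estimate $X\le AX^{\theta}+B$ collapses without it. Note the contrast with what is actually available under \eqref{greenpotentialwithsigma}, namely Theorem \ref{thm:weightednorm}, where \emph{both} norms are taken with respect to $d\sigma$. The paper's proof is engineered precisely so that one never converts an $L^p(dx)$ bound back into a $d\sigma$-integral: it first invokes Theorem \ref{thm:solofinteq} together with Lemma \ref{lemma} (which resolves exactly the $\sigma$--$\mu$ coupling you worry about) to obtain the minimal solution with $\|u\|_{L^{\gamma+q}(\Omega,d\sigma)}<\infty$, and only afterwards upgrades to $L^p(\Omega,dx)$ via the one-directional estimates $\|\mathbf{G}(u^q\,d\sigma)\|_{L^p(\Omega,dx)}\lesssim \|u\|^q_{L^{\gamma+q}(\Omega,d\sigma)}$ (Lemma \ref{weightedlemma} combined with Lemma \ref{lemma2}) and $\|\mathbf{G}\mu\|_{L^p(\Omega,dx)}\le c\,(\mathcal{E}_\gamma[\mu])^{1/(\gamma+1)}$ (Lemma \ref{lemmaformu}). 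Running your loop in $L^{\gamma+q}(\Omega,d\sigma)$ instead of $L^p(\Omega,dx)$ would close it with known tools --- but that is then the proof of Theorem \ref{thm:solofinteq}, i.e.\ the paper's route.

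Two further points. First, your second tool, $\|\mathbf{G}\nu\|^{\gamma+1}_{L^p(\Omega,dx)}\le C\,\mathcal{E}_\gamma[\nu]$ for every $\nu\in\mathcal{M}^+(\Omega)$, is not a citable known fact in this setting (an arbitrary Green function on an arbitrary domain): it is Lemma \ref{lemmaformu}, one of the paper's main new contributions, and its proof requires the energy comparison of Lemma \ref{mainlemma} between $\mu$ and the Riesz measure of $(\mathbf{G}\mu)^{1-q}$, combined with Lemma \ref{lemma2}; assuming it outright leaves a second substantive hole. Second, your first tool is misstated: the two-sided bound $\mathbf{G}\big((\mathbf{G}\sigma)^{s}\,d\sigma\big)\asymp(\mathbf{G}\sigma)^{s+1}$ fails for $s>0$ (test a compactly supported $\sigma$ at points far from its support, where the left side decays like $G$ and the right side like $G^{s+1}$); by Theorem \ref{thm:iterated} only the bound $(\mathbf{G}\sigma)^{s+1}\le (s+1)\,\mathbf{G}\big((\mathbf{G}\sigma)^{s}\,d\sigma\big)$ holds for $s\ge 0$, the reverse direction being valid for $-1<s\le 0$. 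You only use the true direction, in the homogeneous-case subsolution at the end, so this is a misstatement rather than a fatal error, but together with the two gaps above it means the proposal does not yet constitute a proof.
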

A sufficient condition for \eqref{greenpotentialwithsigma} and \eqref{greenpotentialwithmu} with $\gamma=\frac{p(n-2)-n}{n}$ is given by 
\be \label{suffconsigma}
\sigma\in L^{s_1}(\Omega,dx),\quad s_1=\frac{np}{n(1-q)+2p}
\ee
and
\be \label{suffconmu}
\mu\in L^{s_2}(\Omega,dx),\quad s_2=\frac{np}{n+2p}
\ee
where $\frac{n}{n-2}<p<\infty.$ Therefore, the following corollary can be simply deduced from Theorem  \ref{thm:classical}.

\begin{Cor}\label{corollary}
Under the assumptions of Theorem \ref{thm:classical}, if conditions \eqref{suffconsigma} and  \eqref{suffconmu} are fulfilled, then there exists a positive minimal $\mathcal{A}$-superharmonic solution  $u\in L^p(\Omega,dx)$ to \eqref{classicallaplacian}.
\end{Cor}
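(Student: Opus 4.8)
The corollary is an immediate consequence of Theorem~\ref{thm:classical}: it suffices to show that the pointwise integrability hypotheses \eqref{suffconsigma} and \eqref{suffconmu} force the weighted Green-potential bounds \eqref{greenpotentialwithsigma} and \eqref{greenpotentialwithmu} with $\gamma=\tfrac{p(n-2)-n}{n}$. The plan rests on three ingredients: the classical upper estimate for the Green function, the Hardy--Littlewood--Sobolev inequality, and Hölder's inequality.

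First I would invoke the well-known upper bound for the Green function of a uniformly elliptic divergence-form operator,
\[
G(x,y)\leq C\,|x-y|^{2-n},\qquad x,y\in\Omega,
\]
valid on an arbitrary domain $\Omega$ with $n\geq 3$ and $C=C(n,m,M)$ depending only on the dimension and the ellipticity constants. Consequently both Green potentials are dominated by the Riesz potential of order two, $I_2\nu(x):=\int_\Omega |x-y|^{2-n}\,d\nu(y)$, namely $\mathbf{G}\sigma\leq C\,I_2\sigma$ and $\mathbf{G}\mu\leq C\,I_2\mu$ pointwise in $\Omega$.

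Next, writing $d\sigma=f\,dx$ and $d\mu=g\,dx$ with $f\in L^{s_1}(\Omega,dx)$ and $g\in L^{s_2}(\Omega,dx)$, I would apply the mapping property $I_2:L^{s}(\R^n)\to L^{s^*}(\R^n)$, where $\tfrac{1}{s^*}=\tfrac1s-\tfrac2n$, valid for $1<s<\tfrac n2$. A direct computation from \eqref{suffconsigma} gives $\tfrac{1}{s_1}=\tfrac{1-q}{p}+\tfrac2n$, whence $s_1^{*}=\tfrac{p}{1-q}$ and $\mathbf{G}\sigma\in L^{p/(1-q)}(\Omega,dx)$; likewise \eqref{suffconmu} yields $s_2^{*}=p$ and $\mathbf{G}\mu\in L^{p}(\Omega,dx)$. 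The constraint $\tfrac{n}{n-2}<p<\infty$ together with $0<q<1$ keeps both $s_1$ and $s_2$ strictly between $1$ and $\tfrac n2$, so the inequality is applicable in each case.

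Finally I would transfer these unweighted $dx$-bounds to the weighted norms by Hölder's inequality. For $\mu$, splitting the product $(\mathbf{G}\mu)^{\gamma}g$ with the conjugate exponents $\tfrac{p}{\gamma}$ and $s_2$ yields
\[
\int_\Omega (\mathbf{G}\mu)^{\gamma}\,d\mu=\int_\Omega (\mathbf{G}\mu)^{\gamma}g\,dx\leq\Big(\int_\Omega (\mathbf{G}\mu)^{p}\,dx\Big)^{\gamma/p}\,\|g\|_{L^{s_2}}<\infty,
\]
because the conjugate of $p/\gamma$ equals $s_2$ exactly when $\gamma=\tfrac{p(n-2)-n}{n}$, and the right-hand side is finite by the previous step. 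The same computation for $\sigma$, using the exponents $\tfrac{p}{\gamma+q}$ and $s_1$ together with the established bound $\mathbf{G}\sigma\in L^{p/(1-q)}(\Omega,dx)$, produces $\mathbf{G}\sigma\in L^{(\gamma+q)/(1-q)}(\Omega,d\sigma)$. With \eqref{greenpotentialwithsigma} and \eqref{greenpotentialwithmu} verified, Theorem~\ref{thm:classical} delivers the desired positive minimal $\mathcal{A}$-superharmonic solution $u\in L^{p}(\Omega,dx)$. The only delicate point is the bookkeeping of exponents---checking that the Hölder conjugates land precisely on $s_1$ and $s_2$ for the prescribed $\gamma$, and that $\gamma$ and $\gamma+q$ are positive and strictly less than $p$ so that $p/\gamma,\,p/(\gamma+q)>1$---but this is a routine algebraic identity rather than a genuine obstacle.
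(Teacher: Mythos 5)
Your proposal is correct and follows essentially the same route as the paper: both reduce \eqref{suffconsigma} and \eqref{suffconmu} to \eqref{greenpotentialwithsigma} and \eqref{greenpotentialwithmu} by combining the kernel bound $G(x,y)\lesssim |x-y|^{2-n}$, the Hardy--Littlewood--Sobolev inequality, and H\"older's inequality, with identical intermediate exponents ($\mathbf{G}\sigma\in L^{p/(1-q)}(\Omega,dx)$ and $\mathbf{G}\mu\in L^{p}(\Omega,dx)$). The only cosmetic difference is the order of the two inequalities --- you apply HLS first and then H\"older, while the paper applies H\"older first and then HLS --- which changes nothing of substance.
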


Observe that Corollary \ref{corollary} was done by Boccardo and Orsina \cite{BO}, with a different proof, when $\Omega$ is a bounded domain in $\R^n$.
\\
\\
\textit{Organization of the paper}
\\

In Section \ref{sec2}, we organize some definitions and well-known research that are relevant to our problem. In Section \ref{sec3}, we prove an estimate for $p$-th integrability of potentials in terms of generalized Dirichlet energy and the existence result of the problem by applying the previous estimate. Moreover, we provide a sufficient condition for the existence of a positive solution to \eqref{classicallaplacian}.
\\
\\
\textit{Notation}
\\

We use the following notation in this paper. Let $\Omega$ be a connected open subset in $\mathbb{R}^n$.
\begin{itemize}
\item $D$:= a relatively compact open subset of $\Omega$.
\item $\mathcal{H}_\mathcal{A}(D)$:= the set of all continuous $\mathcal{A}$-harmonic functions in $D.$
 \item $\mathcal{C}_0^{\infty}(\Omega)$:= the set of all smooth compactly  supported functions on $\Omega$.
 \item $\mathcal{M}^+(\Omega)$:= the set of all nonnegative Radon measures on $\Omega$.
  \item $L^p(\Omega, d\mu)$:= the $L^p$ space with respect to Radon measure $\mu\in\mathcal{M}^+(\Omega).$
  \item $L^p(\Omega, dx)$:= the $L^p$ space with respect to Lebesgue measure. 
\end{itemize}


 \section{Preliminaries} \label{sec2}
Thoughout, let $\Omega$ be a domain (connected open set) in $\mathbb{R}^n$.
\subsection{Function Spaces}
\begin{Def} 
For $1\leq p<\infty$ and $\mu \in \mathcal{M}^+(\Omega),$ we denote by $L^p(\Omega,d\mu)$ the space of all real-valued measurable functions $f$ on $\Omega$ such that
\[
\|f\|_{L^p(\Omega,d\mu)}=\big(\int_\Omega |f(x)|^p d\mu(x)\big)^\frac{1}{p}<\infty.
\]
\end{Def}
\begin{Def}
A function $u\in W^{1,2}_{loc}(\Omega)$ is said to be {\bf $\mathcal{A}$-harmonic} if $u$ satisfies the equation
\[
\mathcal{L}u=0\quad\text{in}\quad{\Omega}
\]
in the distributional sense, i.e.,
\[
\int_\Omega \mathcal{A}(x,\nabla u(x)) \cdot \nabla\phi dx=0,\quad\forall \phi\in \mathcal{C}^\infty_0(\Omega).
\]
\end{Def}
The set of {$\mathcal{A}$}-harmonic functions on $\Omega$ is denoted by $\mathcal{H}_{\mathcal{A}}(\Omega)$. Every $\mathcal{A}$-harmonic function $u$ has a continuous representative which coincides with $u$ a.e. see\cite[Theorem 3.70] {KHM}.  
\begin{Def}
A function $u:\Omega\to(-\infty,+\infty]$ is {\bf $\mathcal{A}$-superharmonic} if $u$ is lower semicontinuous in $\Omega,$ $u\not\equiv +\infty$ in each component of $\Omega,$ and for each $D\Subset\Omega$ and $h \in \mathcal{C}(\bar{D})\cap\mathcal{H}_{\mathcal{A}}(D)$, the inequality $u\geq h$ on $\partial D$ implies $u\geq h$ in $D.$
\end{Def} 

Let $u$ be an $\mathcal{A}$-superharmonic function in $\Omega.$ Then there exists a unique measure $\omega\in\mathcal{M}^+(\Omega)$ such that
\[
\mathcal{L}u=\omega\quad\text{in}\quad\Omega
\]
in the distributional sense, i.e.,
\[
\int_\Omega \mathcal{A}(x,\nabla u(x)) \cdot \nabla\phi \;dx=\int_\Omega \phi \;d\omega,\quad\forall \phi\in \mathcal{C}^\infty_0(\Omega).
\]
The measure $\omega$ is called the Riesz measure associated with $u,$ see \cite[Theorem 21.2]{KHM}.

\subsection{Potentials}\label{subsec:00000}
Let $\gamma>0$ and $\omega\in\mathcal{M}^+(\Omega),$ and let $G$ be a positive Green function associated with $\mathcal{L}$ on $\Omega.$ 
The generalized Green energy, introduced in \cite{SV2}, of a mesure $\omega \in \mathcal{M}^{+}{\Omega}$ is given by
 \[
 \mathcal{E}_\gamma[\omega]:=\int_\Omega (\mathbf{G}\omega)^\gamma d\omega.
 \]

The first theorem gives an auxiliary fact that will be used in the proof of the main lemma. The complete proof can be seen in \cite[Lemma 3.3]{SV2}.
\begin{Thm}[{\text See \cite{SV2}}]\label{risezmeasure} 
	Let $0<\gamma<1$ and $\mu\in\mathcal{M^+}(\Omega)$.  Suppose $G$ is a positive Green function associated with $\mathcal{L}$ on $\Omega$. Suppose $u:\mathbf{G}\mu\not\equiv \infty.$ Then $w:=u^\gamma$ is a positive $\mathcal{A}$-superharmonic function on $\Omega,$ and $w=\mathbf{G}\omega,$ where $\omega\in\mathcal{M}^+(\Omega)$ is the Riesz measure of $w$. Moreover,
	\[
	 \mathcal{E}_\gamma[\mu] <+\infty
	 \quad\text{if and only if}\quad
	 \mathcal{E}_{1}[\omega] <+\infty
\]
\end{Thm}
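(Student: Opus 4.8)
The plan is to separate the statement into its structural part---that $w=u^{\gamma}$ is a positive $\mathcal{A}$-superharmonic \emph{potential} with Riesz measure $\omega$---and its quantitative part, the energy equivalence, and to treat them in that order. Throughout write $u=\mathbf{G}\mu$, which is a nonnegative $\mathcal{A}$-superharmonic function (not identically $+\infty$ by hypothesis). Since $0<\gamma<1$, the function $\phi(t)=t^{\gamma}$ is nondecreasing and concave on $[0,\infty)$ with $\phi(0)=0$, and a nondecreasing concave function of a nonnegative $\mathcal{A}$-superharmonic function is again $\mathcal{A}$-superharmonic (this follows from the comparison principle in the definition of $\mathcal{A}$-superharmonicity, approximating $\phi$ from above by smooth concave functions, cf.\ \cite{KHM}). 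Hence $w=u^{\gamma}$ is positive and $\mathcal{A}$-superharmonic, and by the Riesz decomposition it has a unique Riesz measure $\omega\in\mathcal{M}^+(\Omega)$ with $w=\mathbf{G}\omega+h$, where $h\ge0$ is the greatest $\mathcal{A}$-harmonic minorant of $w$.

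To upgrade this to $w=\mathbf{G}\omega$ I must show $h=0$, i.e.\ that $w$ is a potential. I would exhaust $\Omega$ by relatively compact open sets $D_k\Subset\Omega$ with $D_k\uparrow\Omega$ and let $\tau^x_{D_k}$ denote $\mathcal{A}$-harmonic measure, a probability measure on $\partial D_k$. The harmonizations $H^{D_k}_{u}(x)=\int u\,d\tau^x_{D_k}$ decrease to the greatest harmonic minorant of $u$, which is $0$ because $u=\mathbf{G}\mu$ is a potential. Since $\phi$ is concave, Jensen's inequality gives
\[
H^{D_k}_{w}(x)=\int u^{\gamma}\,d\tau^x_{D_k}\le\Big(\int u\,d\tau^x_{D_k}\Big)^{\gamma}=\big(H^{D_k}_{u}(x)\big)^{\gamma}\longrightarrow 0 ,
\]
and since $H^{D_k}_{w}\downarrow h$ as $k\to\infty$, we conclude $h=0$, so $w=\mathbf{G}\omega$.

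For the energy equivalence I would first reduce it by Fubini's theorem and the symmetry $G(x,y)=G(y,x)$. Using $u^{\gamma}=w=\mathbf{G}\omega$,
\[
\mathcal{E}_{\gamma}[\mu]=\int_\Omega u^{\gamma}\,d\mu=\int_\Omega \mathbf{G}\omega\,d\mu=\int_\Omega \mathbf{G}\mu\,d\omega=\int_\Omega u\,d\omega,\qquad \mathcal{E}_{1}[\omega]=\int_\Omega \mathbf{G}\omega\,d\omega=\int_\Omega u^{\gamma}\,d\omega ,
\]
so the claim becomes $\int u\,d\omega<\infty\iff\int u^{\gamma}\,d\omega<\infty$. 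This cannot follow from a pointwise comparison, because the inequality between $u$ and $u^{\gamma}$ reverses across the level set $\{u=1\}$; the equivalence must exploit the precise relation between $\omega$ and $\mu$. Writing the Dirichlet bilinear form $\mathcal{D}(f,g)=\int_\Omega \mathcal{A}\nabla f\cdot\nabla g\,dx$, one has, formally, $\mathcal{E}_{1}[\omega]=\mathcal{D}(w,w)=\gamma^{2}\int_\Omega u^{2\gamma-2}\,\mathcal{A}\nabla u\cdot\nabla u\,dx$ and $\mathcal{E}_{\gamma}[\mu]=\mathcal{D}(w,u)=\gamma\int_\Omega u^{\gamma-1}\,\mathcal{A}\nabla u\cdot\nabla u\,dx$, while the chain rule yields $d\omega=\gamma u^{\gamma-1}\,d\mu+\gamma(1-\gamma)u^{\gamma-2}\,\mathcal{A}\nabla u\cdot\nabla u\,dx$. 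The idea is to convert these weighted gradient integrals into integrals of powers of $u$ against $\mu$ by testing $\mathcal{L}u=\mu$ against suitable powers of $u$, and then to compare.

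I expect this last step to be the main obstacle: the displayed identities are only formal under mere measure data and the low ($W^{1,2}_{loc}$, possibly worse) regularity of $u$. To make them rigorous I would truncate, working with $\min(u,j)$ and $(u-j)_+$, establish the integration-by-parts identities at each finite level $j$ where every integral is finite, control the resulting boundary and tail contributions using that $u$ and $w$ are genuine Green potentials (so their harmonizations vanish along the exhaustion), and finally pass to the limit $j\to\infty$ by monotone convergence. These are precisely the estimates carried out in \cite[Lemma 3.3]{SV2}, which I would follow.
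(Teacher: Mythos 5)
The paper does not prove this statement at all: it is imported as an auxiliary fact with the proof delegated to \cite[Lemma 3.3]{SV2}, so there is no internal argument to compare yours against and your attempt must stand on its own. Its first half does stand: the argument that $w=u^{\gamma}$ is $\mathcal{A}$-superharmonic (concavity plus the comparison definition) and that its greatest $\mathcal{A}$-harmonic minorant vanishes (Jensen's inequality against $\mathcal{A}$-harmonic measure along an exhaustion, using that $u=\mathbf{G}\mu$ is a potential and that constants are $\mathcal{A}$-harmonic) is correct and complete modulo standard facts of linear potential theory. The second half does not: after the Fubini reduction you concede that the integration-by-parts identities are only formal, call making them rigorous ``the main obstacle'', and announce that you would follow \cite[Lemma 3.3]{SV2}. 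Deferring the only substantive step of a blind proof to the very reference being quoted is a gap, not a proof.

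Moreover, this particular gap cannot be closed, because the ``Moreover'' clause is false as reproduced here (and so cannot be what \cite[Lemma 3.3]{SV2} actually proves); your own computations already say so. Your Fubini reduction gives the exact identity $\mathcal{E}_{\gamma}[\mu]=\int_{\Omega}u\,d\omega=\int_{\Omega}(\mathbf{G}\omega)^{1/\gamma}\,d\omega=\mathcal{E}_{1/\gamma}[\omega]$, whereas $\mathcal{E}_{1}[\omega]=\int_{\Omega}u^{\gamma}\,d\omega$; and your formal identities $\mathcal{E}_{1}[\omega]\approx\int_{\Omega} u^{2\gamma-2}\,\mathcal{A}\nabla u\cdot\nabla u\,dx$ versus $\mathcal{E}_{\gamma}[\mu]\approx\int_{\Omega} u^{\gamma-1}\,\mathcal{A}\nabla u\cdot\nabla u\,dx$ involve genuinely different weights; that mismatch is real, not an artifact of the formalism. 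Concretely, take $\Omega=B_{1}\subset\mathbb{R}^{3}$, $\mathcal{L}=-\Delta$, $\mu$ Lebesgue measure on $B_{1}$, and $0<\gamma\le\tfrac12$. Then $u=\mathbf{G}\mu=\tfrac{1}{6}(1-|x|^{2})$, so $\mathcal{E}_{\gamma}[\mu]=\int_{B_{1}}u^{\gamma}\,dx<\infty$, while a direct computation gives $d\omega=-\Delta(u^{\gamma})=6^{-\gamma}\bigl[6\gamma(1-|x|^{2})^{\gamma-1}+4\gamma(1-\gamma)|x|^{2}(1-|x|^{2})^{\gamma-2}\bigr]\,dx$, and (using $\mathbf{G}\omega=u^{\gamma}$, which follows from your structural half) $\mathcal{E}_{1}[\omega]=\int_{B_{1}}u^{\gamma}\,d\omega\gtrsim\int_{B_{1}}|x|^{2}(1-|x|^{2})^{2\gamma-2}\,dx=+\infty$ because $2\gamma-2\le-1$. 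So $\mathcal{E}_{\gamma}[\mu]<\infty$ does not imply $\mathcal{E}_{1}[\omega]<\infty$. The correct statement either pairs $\mathcal{E}_{\gamma}[\mu]$ with $\mathcal{E}_{1/\gamma}[\omega]$ --- your Fubini identity, which needs no further work --- or keeps $\mathcal{E}_{1}[\omega]$ but takes $\omega$ to be the Riesz measure of $u^{(1+\gamma)/2}$ rather than of $u^{\gamma}$; the latter is consistent with Lemma \ref{mainlemma} of the paper, whose exponent arithmetic yields $\mathcal{E}_{1}[\omega]\lesssim\mathcal{E}_{\gamma}[\mu]$ exactly when the power $1-q$ equals $(1+\gamma)/2$. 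Had you pushed your formal identities through instead of deferring to \cite{SV2}, you would have discovered both the error and its repair.
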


The next theorem provides sharp lower pointwise estimates for supersolutions to sublinear elliptic equations due to Grigor'yan and Verbitsky, \cite[Theorem 1.3]{GV}.
\begin{Thm}[ {\text See \cite{GV}}]\label{thm:pointwise} 
	Let $0<q<1$ and $\sigma\in\mathcal{M^+}(\Omega)$.  Suppose $G$ is a positive Green function associated with $\mathcal{L}$ on $\Omega$. If $u\in L^q_{loc}(\Omega,d\sigma)$ is a positive supersolution  to the sublinear integral equation
	\be \label{subint}
	u\geq \mathbf{G}(u^qd\sigma),\quad x\in\Omega,
	\ee
	then
	\be \label{globlow}
	u(x)\geq (1-q)^{\frac{1}{1-q}}\big[\mathbf{G}\sigma(x)\big]^{\frac{1}{1-q}},\quad x\in\Omega.
	\ee 
\end{Thm}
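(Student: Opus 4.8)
The plan is to reduce the pointwise bound to a comparison between $u$ and the explicit function $w:=(1-q)^{\frac{1}{1-q}}(\mathbf{G}\sigma)^{\frac{1}{1-q}}$, which I will show is itself a \emph{subsolution} of the same integral inequality. The engine of the argument is the self-improving potential estimate
\begin{equation}
\mathbf{G}\big((\mathbf{G}\sigma)^{\beta}\,d\sigma\big)\ \ge\ \tfrac{1}{1+\beta}\,(\mathbf{G}\sigma)^{1+\beta},\qquad \beta\ge 0. \tag{$\star$}
\end{equation}
Granting $(\star)$ with $\beta=\frac{q}{1-q}$, so that $1+\beta=\frac{1}{1-q}$ and $\frac{1}{1+\beta}=1-q$, a direct computation gives
\[
\mathbf{G}(w^{q}d\sigma)=(1-q)^{\frac{q}{1-q}}\,\mathbf{G}\big((\mathbf{G}\sigma)^{\frac{q}{1-q}}d\sigma\big)\ \ge\ (1-q)^{\frac{q}{1-q}}(1-q)\,(\mathbf{G}\sigma)^{\frac{1}{1-q}}=w,
\]
so $w\le\mathbf{G}(w^{q}d\sigma)$. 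Thus $u$ is a supersolution and $w$ a subsolution of the same equation, and the theorem is exactly the assertion $u\ge w$.

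To prove $(\star)$ I would argue that $v_{\beta}:=\frac{1}{1+\beta}(\mathbf{G}\sigma)^{1+\beta}$ is an $\mathcal{A}$-subsolution whose Riesz measure is controlled by $(\mathbf{G}\sigma)^{\beta}d\sigma$. Writing $W=\mathbf{G}\sigma$ and using the chain rule for the divergence-form operator (valid weakly since $W\in W^{1,2}_{loc}$ off a polar set), for smooth positive $W$ one has
\[
\mathcal{L}(W^{1+\beta})=(1+\beta)W^{\beta}\,\mathcal{L}W-(1+\beta)\beta\,W^{\beta-1}\,\mathcal{A}\nabla W\cdot\nabla W\ \le\ (1+\beta)W^{\beta}\,\mathcal{L}W,
\]
because $\mathcal{A}\nabla W\cdot\nabla W\ge 0$ by ellipticity and $\beta\ge 0$; since $\mathcal{L}W=\sigma$, this yields $\mathcal{L}v_{\beta}\le(\mathbf{G}\sigma)^{\beta}\,d\sigma$ distributionally. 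As $\mathbf{G}((\mathbf{G}\sigma)^{\beta}d\sigma)$ solves $\mathcal{L}(\cdot)=(\mathbf{G}\sigma)^{\beta}d\sigma$ with vanishing boundary values on $\partial_{\infty}\Omega$, and $v_{\beta}$ inherits the same boundary vanishing from $W=\mathbf{G}\sigma$, the domination principle for $\mathcal{L}$ gives $v_{\beta}\le\mathbf{G}((\mathbf{G}\sigma)^{\beta}d\sigma)$, which is $(\star)$.

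It remains to deduce $u\ge w$ from $u\ge\mathbf{G}(u^{q}d\sigma)$ and $w\le\mathbf{G}(w^{q}d\sigma)$, for which I would use a scaling argument exploiting $0<q<1$. Set $\kappa:=\inf_{\{W>0\}}u/w$, so that $u\ge\kappa w$ on $\Omega$. Feeding this into the inequalities,
\[
u\ \ge\ \mathbf{G}(u^{q}d\sigma)\ \ge\ \mathbf{G}\big((\kappa w)^{q}d\sigma\big)\ =\ \kappa^{q}\,\mathbf{G}(w^{q}d\sigma)\ \ge\ \kappa^{q}\,w,
\]
whence $\kappa\ge\kappa^{q}$, i.e.\ $\kappa^{1-q}\ge 1$, i.e.\ $\kappa\ge 1$; therefore $u\ge w$, which is the claimed estimate with the sharp constant $(1-q)^{\frac{1}{1-q}}$.

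The main obstacle is to guarantee $\kappa>0$, i.e.\ the qualitative a priori bound $u\ge\lambda w$ for some $\lambda>0$. This is where $u\in L^{q}_{loc}(\Omega,d\sigma)$ and the strict positivity of a nontrivial $\mathcal{A}$-superharmonic $u$ enter: on any $D\Subset\Omega$ one has $u\ge\delta_{D}>0$ by lower semicontinuity, so $u\ge\mathbf{G}(u^{q}d\sigma)\ge\delta_{D}^{q}\,\mathbf{G}(\sigma|_{D})$, bounding $u$ below by a fixed nontrivial Green potential; bootstrapping this through repeated use of $(\star)$ and a compact exhaustion $\sigma|_{D}\uparrow\sigma$ (with $\mathbf{G}(\sigma|_{D})^{\frac{1}{1-q}}\uparrow w$ by monotone convergence) should upgrade it to comparability with $w$. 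Making this step rigorous, together with justifying the chain-rule computation in $(\star)$ for the merely $\mathcal{A}$-superharmonic $\mathbf{G}\sigma$ and the attendant comparison principle, is the technical heart; once it is in place the sublinear scaling delivers the sharp constant automatically.
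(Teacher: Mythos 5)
The paper does not actually prove this statement: it is imported verbatim from Grigor'yan--Verbitsky \cite[Theorem 1.3]{GV}, so your proposal must be measured against the argument given there. Two pieces of your write-up are sound. The verification that $w:=(1-q)^{\frac{1}{1-q}}(\mathbf{G}\sigma)^{\frac{1}{1-q}}$ satisfies $w\le\mathbf{G}(w^{q}d\sigma)$ is a correct consequence of your inequality $(\star)$, and $(\star)$ is nothing other than part (i) of Theorem \ref{thm:iterated} with $t=1+\beta$, so it may simply be quoted. Your proposed chain-rule proof of $(\star)$, however, does not hold up: $\mathbf{G}\sigma$ is in general \emph{not} in $W^{1,2}_{loc}$ (already $G(\cdot,y)=\mathbf{G}\delta_{y}$ fails this near the pole, since $|\nabla G|^{2}\sim|x-y|^{2-2n}$ is not locally integrable), $W^{\beta}$ is not an admissible test function, and the concluding ``domination principle'' step needs decay of $(\mathbf{G}\sigma)^{1+\beta}$ at $\partial_{\infty}\Omega$, which is unavailable on an arbitrary domain; the kernel-theoretic proof in \cite{GV} (quasi-symmetry plus the weak maximum principle) is what makes \eqref{iterated} true in this generality. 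Your sublinear scaling step is also fine as far as it goes: if $u\ge\lambda w$ in $\Omega$ for \emph{some} $\lambda>0$, then iterating $u\ge\mathbf{G}(u^{q}d\sigma)\ge\lambda^{q}\mathbf{G}(w^{q}d\sigma)\ge\lambda^{q}w$ gives $u\ge\lambda^{q^{j}}w$ and $\lambda^{q^{j}}\to1$, hence $u\ge w$.

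The genuine gap is exactly the step you defer: producing \emph{any} $\lambda>0$ with $u\ge\lambda w$ globally. This is not a removable technicality, because a global bound $u\ge\lambda(\mathbf{G}\sigma)^{\frac{1}{1-q}}$ is already the theorem up to the value of the constant; it cannot come from soft considerations, since your local bound $u\ge\delta_{D}^{q}\,\mathbf{G}(\sigma|_{D})$ has the wrong exponent ($1$ instead of $\frac{1}{1-q}$), involves only the truncated measure, and has a constant $\delta_{D}=\inf_{D}u$ that in general degenerates as $D\uparrow\Omega$ (e.g.\ when $u$ vanishes at the boundary). The actual proof in \cite{GV} is precisely the bootstrap you gesture at in your last paragraph, carried out to completion, and once it is done your comparison framework becomes superfluous. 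Concretely: fix $K\Subset\Omega$, set $\sigma_{K}:=\sigma|_{K}$, and start from $u\ge\delta_{K}^{q}\,\mathbf{G}\sigma_{K}$ (here $\delta_{K}>0$ by positivity and lower semicontinuity of the $\mathcal{A}$-superharmonic function $u$). If $u\ge c\,(\mathbf{G}\sigma_{K})^{s}$ in $\Omega$, then $(\star)$ applied to $\sigma_{K}$ yields
\[
u\ \ge\ \mathbf{G}(u^{q}d\sigma_{K})\ \ge\ c^{q}\,\mathbf{G}\big((\mathbf{G}\sigma_{K})^{sq}d\sigma_{K}\big)\ \ge\ \frac{c^{q}}{1+qs}\,(\mathbf{G}\sigma_{K})^{1+qs}.
\]
The exponents $s_{j+1}=1+qs_{j}$ increase to $\frac{1}{1-q}$, and the constants $c_{j+1}=c_{j}^{q}/s_{j+1}$ satisfy $\log c_{J}=q^{J-1}\log c_{1}-\sum_{j=2}^{J}q^{J-j}\log s_{j}\to\frac{1}{1-q}\log(1-q)$: the dependence on the starting constant $c_{1}=\delta_{K}^{q}$ is annihilated in the limit precisely because $q<1$. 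Hence $u\ge(1-q)^{\frac{1}{1-q}}(\mathbf{G}\sigma_{K})^{\frac{1}{1-q}}$ with a constant independent of $K$, and letting $K\uparrow\Omega$ with monotone convergence gives \eqref{globlow}, sharp constant included, with no super/subsolution comparison at all. In short, the rigorous part of your proposal (the subsolution property of $w$ and the scaling) is dispensable, while the part you leave as a sketch is the entire proof.
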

We use the following pointwise iterated inequalities to derive the Green potential estimate,  see \cite[ Lemma 2.5]{GV}.

\begin{Thm}[{\text See \cite{GV}}]\label{thm:iterated} 
Let $\sigma\in\mathcal{M^+}(\Omega)$ with $\sigma \not\equiv 0$, and let $G$ be the positive Green function associated with $\mathcal{L}$ on $\Omega.$ Then the following estimates hold.\\
(i) If $t\geq 1,$ then
\be \label{iterated}
(\mathbf{G}\sigma)^t(x)\leq t \mathbf{G}((\mathbf{G}\sigma)^{t-1}d\sigma)(x),\quad   x\in \Omega. 
\ee
(ii) If $0<t\leq1,$ then
\be \label{iteratedgeq}
(\mathbf{G}\sigma)^t(x)\geq t\mathbf{G}((\mathbf{G}\sigma)^{t-1}d\sigma)(x),\quad x\in \Omega.
\ee
\end{Thm}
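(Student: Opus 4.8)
The plan is to exploit the elliptic structure directly rather than to manipulate the double integrals that define the iterated potentials. Write $u := \mathbf{G}\sigma$, so that $\mathcal{L}u = \sigma$ in the distributional sense and $u$ is a Green potential, hence $\mathcal{A}$-superharmonic and vanishing in the appropriate sense toward $\partial_\infty\Omega$. The idea is to compute $\mathcal{L}(u^t)$ by the chain rule. Formally,
\[
\mathcal{L}(u^t) = -\mathrm{div}\!\left(\mathcal{A}\,\nabla u^t\right) = t\,u^{t-1}\,\mathcal{L}u - t(t-1)\,u^{t-2}\,\mathcal{A}\nabla u\cdot\nabla u,
\]
and the decisive observation is that the quadratic form is nonnegative, $\mathcal{A}\nabla u\cdot\nabla u \ge m|\nabla u|^2 \ge 0$, by uniform ellipticity. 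Thus the sign of the correction term $-t(t-1)u^{t-2}\,\mathcal{A}\nabla u\cdot\nabla u$ is governed entirely by the sign of $t(t-1)$.

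This immediately produces the two cases. For $t \ge 1$ one has $t(t-1)\ge 0$, so the correction is $\le 0$ and therefore $\mathcal{L}(u^t)\le t\,u^{t-1}\,d\sigma$ as measures; that is, $u^t$ is an $\mathcal{A}$-subsolution associated with $t\,u^{t-1}d\sigma$. For $0<t\le 1$ one has $t(t-1)\le 0$, the correction is $\ge 0$, and $u^t$ is a supersolution, $\mathcal{L}(u^t)\ge t\,u^{t-1}d\sigma$. In each case I would conclude by the comparison (domination) principle for the Green potential. Setting $\nu := t\,u^{t-1}d\sigma$, the difference $w := \mathbf{G}\nu - u^t$ satisfies $\mathcal{L}w = \nu - \mathcal{L}(u^t)$, which is $\ge 0$ when $t\ge 1$ and $\le 0$ when $0<t\le 1$. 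Since both $\mathbf{G}\nu$ and $u^t$ inherit the vanishing boundary behaviour of potentials, the minimum principle for $\mathcal{A}$-superharmonic functions forces $w\ge 0$ in the first case and the maximum principle forces $w\le 0$ in the second; these are exactly \eqref{iterated} and \eqref{iteratedgeq}.

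The hard part is not the sign bookkeeping but making the chain-rule computation rigorous, since $u=\mathbf{G}\sigma$ is merely superharmonic: it may equal $+\infty$ on a polar set, it need not lie in $W^{1,2}_{loc}(\Omega)$, and $\sigma$ is only a measure, so neither $\nabla u^t$ nor the product $u^{t-1}d\sigma$ is a priori meaningful. I would handle this by a two-step approximation. First establish the inequalities when $d\sigma = f\,dx$ with $f$ smooth and compactly supported, so that $u$ is smooth and bounded and the divergence identity above is literal; then take an increasing sequence $\sigma_k \uparrow \sigma$ of such regular measures, so that $u_k := \mathbf{G}\sigma_k \uparrow u$ by monotone convergence, and pass to the limit. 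For $t\ge 1$ both sides are monotone in $k$ and the passage is routine by monotone convergence. For $0<t<1$ the exponent $t-1$ is negative, so $u_k^{t-1}\downarrow u^{t-1}$ while the measures $\sigma_k$ increase, and reconciling these opposite monotonicities inside $\mathbf{G}(u_k^{t-1}d\sigma_k)$ is the delicate point; I expect to need a Fatou-type lower bound, together with the sharp lower estimate \eqref{globlow} (or a direct truncation of $u$ at level $k$), to secure the inequality in the limit. One must likewise check that $u_k^{t-1}$ is integrable against $\sigma_k$, which again is transparent on the strictly positive smooth approximants before the regularization is removed.
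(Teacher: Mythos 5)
First, note that the paper does not prove this statement at all: it is quoted verbatim from [GV, Lemma 2.5]. The proof there is purely kernel-theoretic. Fixing $x$, one sets $d\nu_x(y)=G(x,y)\,d\sigma(y)$ and observes that the weak maximum principle for the Green kernel (with constant $1$) gives $\nu_x\bigl(\{y:\mathbf{G}\sigma(y)\le\lambda\}\bigr)\le\lambda$ for every $\lambda>0$; inequalities \eqref{iterated} and \eqref{iteratedgeq} then follow by integrating this distribution-function bound against $\lambda^{t-2}\,d\lambda$ (a layer-cake computation). That argument needs no differentiability whatsoever and works for any kernel satisfying the WMP, e.g.\ for nonlocal operators. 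Your route — convexity/concavity of $s\mapsto s^t$ played against ellipticity via $\mathcal{L}(u^t)=t u^{t-1}\mathcal{L}u - t(t-1)u^{t-2}\mathcal{A}\nabla u\cdot\nabla u$ — is genuinely different and its sign bookkeeping is correct; it is the ``PDE shadow'' of the kernel inequality. But as written it has three concrete gaps.

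First, the base case is not as clean as you claim: even for $d\sigma=f\,dx$ with $f\in\mathcal{C}^\infty_0(\Omega)$, the potential $u=\mathbf{G}(f\,dx)$ is \emph{not} smooth, because $\mathcal{A}$ has merely bounded measurable entries; De Giorgi--Nash--Moser gives only H\"older continuity and $u\in W^{1,2}_{loc}(\Omega)$. The chain rule must therefore be run in the weak Sobolev form (test $\mathcal{L}u=f$ against $\Phi'(u)\phi$ with $\Phi(s)=s^t$), which does work since such $u$ is locally bounded and locally bounded away from zero, but this is precisely the step that needs writing. Second, the comparison step is justified incorrectly: on an arbitrary Greenian domain, Green potentials need \emph{not} ``inherit vanishing boundary behaviour'' pointwise (irregular boundary points, unbounded $\Omega$), so the boundary minimum/maximum principle as you invoke it is not available. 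The correct tools are: for \eqref{iteratedgeq}, the Riesz decomposition — a nonnegative $\mathcal{A}$-superharmonic function dominates the Green potential of its Riesz measure, no boundary analysis needed; for \eqref{iterated}, the domination principle — an $\mathcal{A}$-subharmonic function dominated by \emph{some} potential is $\le 0$ — and this requires exhibiting a dominating potential for $u^t$ in advance, which is available for bounded approximants (since then $u^t\le\|u\|_\infty^{t-1}\,\mathbf{G}\sigma$) but is essentially the statement being proved for $u$ itself, so the order of the argument matters.

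Third, the approximation scheme is internally inconsistent: mollified measures are not setwise monotone, while monotone restrictions $\sigma|_{K_k}$ are not regular (their potentials can still fail to be in $W^{1,2}_{loc}$ or locally bounded), so ``an increasing sequence of such regular measures'' with $\mathbf{G}\sigma_k\uparrow\mathbf{G}\sigma$ is not something you have produced; the standard device for these operators is truncation, $\min(u,k)$, which is $\mathcal{A}$-superharmonic, lies in $W^{1,2}_{loc}(\Omega)$, and is itself a potential, combined with restriction of the measure. On the other hand, the case-(ii) limit that you flag as the delicate point actually closes easily once one has setwise increasing $\sigma_k$: since $u_k\le u$ and $t-1\le 0$, one has $u_k^{t-1}\ge u^{t-1}$, hence
\[
u^t\ \ge\ u_k^t\ \ge\ t\,\mathbf{G}(u_k^{t-1}\,d\sigma_k)\ \ge\ t\,\mathbf{G}(u^{t-1}\,d\sigma_k)\ \uparrow\ t\,\mathbf{G}(u^{t-1}\,d\sigma),
\]
with no Fatou argument and no appeal to \eqref{globlow}. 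In summary: the idea is viable and genuinely different from the cited proof, but the proposal as it stands rests on a false regularity claim, an invalid boundary comparison, and an unresolved approximation step; the kernel/WMP proof of [GV] avoids all three issues, which is what it buys in this generality.
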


The argument of finding a solution depends on the following weighted norm inequalities of the $(s,r)$-type in the case where $0<r<s$ and $1<s<\infty,$ for operators $\mathbf{G}$:
\be \label{weightednorm}
\|\mathbf{G}(fd\sigma)\|_{L^r(\Omega,d\sigma)}\leq c\|f\|_{L^s(\Omega,d\sigma)},\quad f\in L^s(\Omega,d\sigma),
\ee
where $c$ is a positive constant independent of $f,$ for an arbitrary measure $\sigma \in \mathcal{M^+}(\Omega),$ under certain assumptions on $G,$ see \cite[Theorem 1.1]{V1}.
\begin{Thm} [{\text See \cite{V1}}] \label{thm:weightednorm}  
Let $\sigma\in\mathcal{M^+}(\Omega)$ with $\sigma \not\equiv 0,$ and let $G$ be the positive Green function associated with $\mathcal{L}$ on $\Omega.$\\
(i)If $1<s<\infty$ and $0<r<s,$ then the weighted norm inequality \eqref{weightednorm} is fulfilled if and only if
\be \label{greenpotsr}
\mathbf{G}\sigma \in L^{\frac{sr}{s-r}}(\Omega,d\sigma).
\ee
(ii)If $0<q<1$ and $0<\gamma<\infty,$ then there exists a positive (super)solution $u\in L^{\gamma +q}(\Omega,d\sigma)$ to sublinear integral equation \eqref{subint} if and only if the weighted norm inequality \eqref{weightednorm} is fulfilled with $r=\gamma+q$ and $s=\frac{\gamma+q}{q},$ i.e.,
\be \label{weightednormth}
\|\mathbf{G}(fd\sigma)\|_{L^{\gamma+q}(\Omega,d\sigma)}\leq c\|f\|_{L^{\frac{\gamma+q}{q}}(\Omega,d\sigma)},\quad  f\in L^{\frac{\gamma+q}{q}}(\Omega,d\sigma),
\ee
or equivalently,
\be \label{greenpotrq}
\mathbf{G}\sigma\in L^{\frac{\gamma+q}{1-q}}(\Omega,d\sigma).
\ee

\end{Thm}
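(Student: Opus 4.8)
The plan is to prove both parts together, deducing part (ii) from part (i) after matching exponents. Write $\mathbf{G}\nu(x)=\int_\Omega G(x,y)\,d\nu(y)$ and put $t=\frac{sr}{s-r}$, so that the conjugacy relation $\frac{1}{r}=\frac{1}{s}+\frac{1}{t}$ holds and the target exponent in \eqref{greenpotsr} is exactly $L^{t}(d\sigma)$. For the necessity in (i), I would assume \eqref{weightednorm} and test it against $f=(\mathbf{G}\sigma)^{\tau-1}$ with $\tau=\frac{s}{s-r}\geq 1$. The iterated inequality \eqref{iterated} of Theorem \ref{thm:iterated}(i) gives $\mathbf{G}\big((\mathbf{G}\sigma)^{\tau-1}d\sigma\big)\geq \tfrac{1}{\tau}(\mathbf{G}\sigma)^{\tau}$, and since $\tau r=(\tau-1)s=t$, substituting this $f$ into \eqref{weightednorm} yields $\tfrac{1}{\tau}A^{1/r}\leq c\,A^{1/s}$ with $A=\int_\Omega(\mathbf{G}\sigma)^{t}\,d\sigma$. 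Because $\frac1r-\frac1s=\frac1t>0$, this forces $A\leq(c\tau)^{t}<\infty$, i.e.\ \eqref{greenpotsr}. The only delicate point is justifying the a priori finiteness of $A$ before dividing; I would run the argument first on a compact exhaustion $\Omega_k\uparrow\Omega$ with $\sigma_k=\sigma|_{\Omega_k}$ (where potentials are bounded and $A_k<\infty$), obtain the uniform bound $A_k\leq(c\tau)^{t}$, and then pass to the limit by monotone convergence.

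The sufficiency in (i) is the heart of the matter. Assuming $\mathbf{G}\sigma\in L^{t}(d\sigma)$, I must upgrade this single integrability condition to the full operator bound \eqref{weightednorm}. Here I would use that the Green kernel of a uniformly elliptic divergence-form operator obeys the weak maximum principle and carries a quasi-metric structure through $d(x,y)=G(x,y)^{-1}$, which allows a dyadic decomposition of $G$ and a Schur-type test adapted to that quasi-metric. Concretely, I would seek a Schur function of the form $\varphi=(\mathbf{G}\sigma)^{a}$, with $a$ dictated by $\frac1r=\frac1s+\frac1t$, and verify the two Schur integral conditions using the pointwise inequalities \eqref{iterated}--\eqref{iteratedgeq} together with Hölder's inequality. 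I expect this step to be the main obstacle, since converting an integrability property of one potential into a genuine $(s,r)$-inequality for $0<r<s$ relies essentially on the maximum principle for $G$ and not merely on formal exponent bookkeeping.

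For part (ii), set $r=\gamma+q$ and $s=\frac{\gamma+q}{q}$; then $\frac{sr}{s-r}=\frac{\gamma+q}{1-q}$, so by part (i) the inequality \eqref{weightednormth} is equivalent to the potential condition \eqref{greenpotrq}, and it remains to connect this with solvability of \eqref{subint}. For the forward implication, if $u\in L^{\gamma+q}(\Omega,d\sigma)$ is a positive supersolution, the sharp lower bound \eqref{globlow} of Theorem \ref{thm:pointwise} gives $\mathbf{G}\sigma\leq (1-q)^{-1}u^{1-q}$, whence $\int_\Omega(\mathbf{G}\sigma)^{\frac{\gamma+q}{1-q}}d\sigma\leq (1-q)^{-\frac{\gamma+q}{1-q}}\int_\Omega u^{\gamma+q}d\sigma<\infty$, which is \eqref{greenpotrq}. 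For the converse I would build a solution by monotone iteration, setting $u_0=c_0(\mathbf{G}\sigma)^{1/(1-q)}$ and $u_{j+1}=\mathbf{G}(u_j^{q}d\sigma)$; the weighted norm inequality \eqref{weightednormth} yields the recursion $\|u_{j+1}\|_{L^{\gamma+q}(d\sigma)}\leq c\,\|u_j\|_{L^{\gamma+q}(d\sigma)}^{q}$, whose fixed point $c^{1/(1-q)}$ bounds the sequence uniformly. Choosing $c_0$ so that the iteration is monotone, I would then pass to the limit by monotone convergence to obtain a positive supersolution, and in fact a solution, $u\in L^{\gamma+q}(\Omega,d\sigma)$ of \eqref{subint}. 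The delicate points are the normalization of $u_0$ ensuring monotonicity and checking that the limit satisfies the equation with equality, both controlled by Theorems \ref{thm:pointwise} and \ref{thm:iterated}.
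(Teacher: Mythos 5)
This theorem is not proved in the paper at all: it is imported verbatim from \cite[Theorem 1.1]{V1}, so there is no internal argument to compare against, and your attempt has to stand on its own. Much of it does. The necessity half of (i) is the standard testing argument and is correct in outline: plugging $f=(\mathbf{G}\sigma)^{\tau-1}$, $\tau=\frac{s}{s-r}$, into \eqref{weightednorm} and using \eqref{iterated} gives $\frac{1}{\tau}A^{1/r}\le c\,A^{1/s}$ with $A=\int_\Omega(\mathbf{G}\sigma)^{\frac{sr}{s-r}}d\sigma$, hence $A\le(c\tau)^{\frac{sr}{s-r}}$. One repair is needed: restricting $\sigma$ to a compact exhaustion does \emph{not} make the truncated potentials bounded or $A_k$ finite (take $\sigma$ with an atom at $x_0$, so that $\mathbf{G}\sigma_k(x_0)=\infty$); you must instead restrict to sets of the form $\Omega_k\cap\{\mathbf{G}\sigma\le k\}$, on which the restricted potential is bounded $\sigma$-a.e., and separately verify that $\sigma(\{\mathbf{G}\sigma=\infty\})=0$ under \eqref{weightednorm}. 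Similarly, in (ii) both of your implications are sound: ``supersolution $\Rightarrow$ \eqref{greenpotrq}'' follows from Theorem \ref{thm:pointwise} exactly as you say, and ``\eqref{weightednormth} $\Rightarrow$ existence of a solution'' works by your monotone iteration from $u_0=(1-q)^{1/(1-q)}(\mathbf{G}\sigma)^{1/(1-q)}$, which is monotone by \eqref{iterated} and bounded in $L^{\gamma+q}(\Omega,d\sigma)$ via the recursion $\|u_{j+1}\|\le c\|u_j\|^{q}$ (the finiteness of $\|u_0\|$ coming from \eqref{greenpotrq}, which you may legitimately extract from \eqref{weightednormth} by the necessity direction already proved).

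The genuine gap is the sufficiency half of (i): the implication \eqref{greenpotsr} $\Rightarrow$ \eqref{weightednorm}. What you offer there is a plan --- ``seek a Schur function $\varphi=(\mathbf{G}\sigma)^{a}$, use a dyadic decomposition in the quasi-metric $1/G$'' --- not a proof, and you flag it yourself as the main obstacle. This implication is the entire substance of Verbitsky's theorem: it genuinely requires the weak maximum principle and quasi-symmetry of the Green kernel and the quasi-metric/Schur-test machinery of \cite{V1}, and it cannot be extracted from Theorems \ref{thm:pointwise}--\ref{thm:iterated} by exponent bookkeeping, since a single integrability condition on $\mathbf{G}\sigma$ must be upgraded to an operator bound valid for \emph{all} $f\in L^{s}(\Omega,d\sigma)$. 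The omission also propagates into (ii): since your construction of a (super)solution starts from \eqref{weightednormth}, and your other arguments only go from the supersolution to \eqref{greenpotrq} and from \eqref{weightednormth} to \eqref{greenpotrq}, nothing in the proposal ever gets from the potential condition \eqref{greenpotrq} back to \eqref{weightednormth} or to a supersolution. So the three-way equivalence asserted in (ii) is not closed; as written, the proposal establishes the ``easy'' directions and defers precisely the step that makes the theorem nontrivial.
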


The next theorem gives the existence of a positive solution $u\in L^{\gamma+q}(\Omega,d\sigma)(\gamma>0)$ to the integral equation \eqref{integraleq} in the sublinear case, which is claimed by Seesanea and Verbitsky, see \cite[Theorem 4.2]{SV2}.

\begin{Thm} [\text See \cite{SV2}]  \label{thm:solofinteq}  
	Let $0<q<1, 0<\gamma<\infty$ and $\sigma,\mu \in \mathcal{M^+}(\Omega)$ with $\sigma,\mu \not\equiv 0.$ Suppose $G$ is a positive quasi-symmetric lower semicontinuous kernel on $\Omega \times \Omega,$ which satisfies the WMP. If \eqref{greenpotentialwithsigma} and
	\be \label{greenpotwithsigma}
	\mathbf{G}\mu\in L^{\gamma+q}(\Omega,d\sigma)
	\ee  hold, then there exists a positive (minimal) solution $u\in L^{\gamma+q}(\Omega,d\sigma)$ to \eqref{integraleq}.
	The converse statement is valid without the quasi-symmetry assumption on $G.$
\end{Thm}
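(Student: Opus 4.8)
The plan is to construct the minimal solution by monotone successive approximations, reducing the integral equation \eqref{integraleq} to a scalar sublinear recursion by means of the weighted norm inequality of Theorem \ref{thm:weightednorm}. First I would note that, by Theorem \ref{thm:weightednorm}(ii), the hypothesis \eqref{greenpotentialwithsigma} is equivalent to the weighted norm inequality \eqref{weightednormth} with $r=\gamma+q$ and $s=\frac{\gamma+q}{q}$. Testing \eqref{weightednormth} against $f=u^q$ gives the basic a priori estimate
\[
\|\mathbf{G}(u^q d\sigma)\|_{L^{\gamma+q}(\Omega,d\sigma)}\le c\,\|u\|_{L^{\gamma+q}(\Omega,d\sigma)}^{\,q},
\]
which is the precise point at which the sublinearity $0<q<1$ is exploited; the remaining hypothesis \eqref{greenpotwithsigma} ensures that $B:=\|\mathbf{G}\mu\|_{L^{\gamma+q}(\Omega,d\sigma)}$ is finite, so that the iteration can be started.

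Next I would set up the monotone iteration $u_0:=\mathbf{G}\mu$ and $u_{j+1}:=\mathbf{G}(u_j^q d\sigma)+\mathbf{G}\mu$. Since $t\mapsto t^q$ is nondecreasing and $\mathbf{G}$ is order preserving, the monotonicity $u_j\le u_{j+1}$ follows by induction from $u_1\ge\mathbf{G}\mu=u_0$. For the uniform bound put $A_j:=\|u_j\|_{L^{\gamma+q}(\Omega,d\sigma)}$ and $\beta:=\min(\gamma+q,1)$. Combining the quasi-norm subadditivity $\|f+g\|_{L^{\gamma+q}(\Omega,d\sigma)}^{\beta}\le\|f\|_{L^{\gamma+q}(\Omega,d\sigma)}^{\beta}+\|g\|_{L^{\gamma+q}(\Omega,d\sigma)}^{\beta}$ with the displayed estimate yields the scalar recursion $A_{j+1}^{\beta}\le c^{\beta}A_j^{q\beta}+B^{\beta}$. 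Writing $y_j:=A_j^{\beta}$, this reads $y_{j+1}\le\tilde c\,y_j^{q}+\tilde B$ with $\tilde c=c^{\beta}$ and $\tilde B=B^{\beta}$, a sublinear recursion since $0<q<1$. The map $\Phi(y)=\tilde c\,y^{q}+\tilde B$ is concave and nondecreasing with $\Phi(0)\ge 0$ and $\Phi(y)/y\to 0$ as $y\to\infty$, so it has a largest fixed point $K$ satisfying $\Phi(K)=K\ge\tilde B=y_0$; an induction then gives $y_j\le K$, hence the $j$-independent bound $A_j\le K^{1/\beta}$. Converting the sublinear recursion into this uniform bound, with care taken to use quasi-norm subadditivity in place of the triangle inequality when $\gamma+q<1$, is the crux of the argument.

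Finally I would set $u:=\sup_j u_j=\lim_{j\to\infty}u_j$, which exists by monotonicity. The uniform bound and Fatou's lemma give $\|u\|_{L^{\gamma+q}(\Omega,d\sigma)}\le K^{1/\beta}<\infty$, so $u^q\in L^{\frac{\gamma+q}{q}}(\Omega,d\sigma)$ and, by \eqref{weightednormth}, $\mathbf{G}(u^q d\sigma)$ is finite $d\sigma$-a.e. Since $u_j^q\uparrow u^q$ and $G\ge 0$, the monotone convergence theorem lets me pass to the limit in $u_{j+1}=\mathbf{G}(u_j^q d\sigma)+\mathbf{G}\mu$, producing the identity \eqref{integraleq}; thus $u$ solves the equation and is positive, since $u\ge\mathbf{G}\mu>0$. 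Minimality follows by comparison: if $v$ is any positive supersolution of \eqref{integraleq}, then $v\ge\mathbf{G}\mu=u_0$, and $v\ge u_j$ for every $j$ by the same monotone induction, whence $v\ge u$.

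The structural hypotheses enter only through the two cited estimates. It is the weak maximum principle for $G$, together with quasi-symmetry in the forward direction, that promotes the integrability condition \eqref{greenpotentialwithsigma} to the operator inequality \eqref{weightednormth} driving the a priori bound. For the converse, quasi-symmetry can be dropped because one argues pointwise: a solution $u\in L^{\gamma+q}(\Omega,d\sigma)$ of \eqref{integraleq} is a supersolution of \eqref{subint}, so the lower bound \eqref{globlow} of Theorem \ref{thm:pointwise} gives $u\ge(1-q)^{\frac{1}{1-q}}(\mathbf{G}\sigma)^{\frac{1}{1-q}}$; raising to the power $\gamma+q$ and integrating against $d\sigma$ yields \eqref{greenpotentialwithsigma}, while $\mathbf{G}\mu\le u$ yields \eqref{greenpotwithsigma}.
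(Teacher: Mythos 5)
Your proposal is correct, but note that the paper itself gives no proof of this statement: it is quoted verbatim from \cite[Theorem 4.2]{SV2}, so there is no internal argument to compare against. What you have written is essentially a faithful reconstruction of the standard proof from that reference --- monotone iteration $u_{j+1}=\mathbf{G}(u_j^q\,d\sigma)+\mathbf{G}\mu$ starting from $u_0=\mathbf{G}\mu$, a uniform $L^{\gamma+q}(\Omega,d\sigma)$ bound obtained from the weighted norm inequality of Theorem \ref{thm:weightednorm}(ii) and the sublinear scalar recursion (with the correct quasi-norm modification when $\gamma+q<1$), monotone convergence to pass to the limit, comparison for minimality, and the Grigor'yan--Verbitsky lower bound \eqref{globlow} together with $u\geq\mathbf{G}\mu$ for the converse without quasi-symmetry.
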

The following lemma was stated in \cite[Lemma 4.3]{SV2}. We can control the interaction between the measure coefficient and measure data by applying the following lemma.
\begin{Lem}[\text See \cite{SV2}] \label{lemma}
	Let $0<q<1, 0<\gamma<\infty,$ and $\sigma,\mu \in \mathcal{M^+}(\Omega).$ Suppose $G$ is a positive Green function associated with $\mathcal{L}$ on $\Omega.$ Then conditions \eqref{greenpotentialwithsigma} and \eqref{greenpotentialwithmu} imply \eqref{greenpotwithsigma}.
\end{Lem}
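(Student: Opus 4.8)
The plan is to reduce the mixed integrability condition \eqref{greenpotwithsigma}, namely $\mathbf{G}\mu\in L^{\gamma+q}(\Omega,d\sigma)$, to the two \emph{diagonal} energy hypotheses \eqref{greenpotentialwithsigma} and \eqref{greenpotentialwithmu} by exploiting the symmetry of the Green kernel (which holds since $\mathcal{A}$ is symmetric, so $G(x,y)=G(y,x)$) together with the iterated potential estimates. First I would record how each hypothesis enters: by Theorem~\ref{thm:weightednorm}, condition \eqref{greenpotentialwithsigma} is equivalent to the weighted norm inequality \eqref{weightednormth} with $r=\gamma+q$ and $s=\frac{\gamma+q}{q}$, since a direct computation gives $\frac{sr}{s-r}=\frac{\gamma+q}{1-q}$; this is the only form in which \eqref{greenpotentialwithsigma} will be used. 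Condition \eqref{greenpotentialwithmu} is simply the finiteness of the generalized energy $\mathcal{E}_\gamma[\mu]=\int_\Omega(\mathbf{G}\mu)^{\gamma}\,d\mu$.

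For the core estimate I would first treat the case $\gamma+q\ge 1$ and apply the iterated inequality \eqref{iterated} of Theorem~\ref{thm:iterated}(i) with $t=\gamma+q$, yielding the pointwise bound $(\mathbf{G}\mu)^{\gamma+q}\le(\gamma+q)\,\mathbf{G}\big((\mathbf{G}\mu)^{\gamma+q-1}d\mu\big)$. Integrating against $d\sigma$ and using Fubini together with the symmetry of $G$ turns the outer potential into $\mathbf{G}\sigma$ paired with $d\mu$, so that $\int_\Omega(\mathbf{G}\mu)^{\gamma+q}\,d\sigma\le(\gamma+q)\int_\Omega(\mathbf{G}\mu)^{\gamma+q-1}\,\mathbf{G}\sigma\,d\mu$. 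I would then apply H\"older's inequality with the conjugate exponents $\frac{\gamma}{\gamma+q-1}$ and $\frac{\gamma}{1-q}$ (both exceeding $1$ precisely because $0<q<1$ and $\gamma+q>1$): the first factor becomes $\big(\int_\Omega(\mathbf{G}\mu)^{\gamma}d\mu\big)^{(\gamma+q-1)/\gamma}$, finite by \eqref{greenpotentialwithmu}, and there remains a factor of the shape $\int_\Omega(\mathbf{G}\sigma)^{\gamma/(1-q)}\,d\mu$.

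The main obstacle is exactly this last, \emph{interaction} term: it pairs $\mathbf{G}\sigma$ against the measure $d\mu$, whereas the hypotheses only control $\mathbf{G}\sigma$ against $d\sigma$ (through \eqref{greenpotentialwithsigma}) and $\mathbf{G}\mu$ against $d\mu$ (through \eqref{greenpotentialwithmu}); moreover the exponent $\frac{\gamma}{1-q}$ differs from the exponent $\frac{\gamma+q}{1-q}$ appearing in \eqref{greenpotentialwithsigma}. This cross-measure, cross-exponent mismatch is the difficulty the lemma is designed to absorb, namely controlling the interaction between the coefficient $\sigma$ and the datum $\mu$. To close the argument I would linearize the $\mu$-side using Theorem~\ref{risezmeasure}: for $0<\gamma<1$ the function $(\mathbf{G}\mu)^{\gamma}$ is itself a Green potential $\mathbf{G}\omega$ of a measure $\omega$ with finite energy $\mathcal{E}_1[\omega]<\infty$, which recasts the target \eqref{greenpotwithsigma} as $\mathbf{G}\omega\in L^{(\gamma+q)/\gamma}(\Omega,d\sigma)$. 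A second application of \eqref{iterated}, combined with the symmetry identity $\int_\Omega\mathbf{G}\sigma\,d\nu=\int_\Omega\mathbf{G}\nu\,d\sigma$, then allows the interaction term to be re-expressed as a genuinely diagonal pairing of potentials, which can finally be dominated by the two energies via the weighted norm inequality \eqref{weightednormth}.

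The remaining ranges I would handle by the same scheme: when $\gamma+q<1$ one adjusts the exponent $t$ in Theorem~\ref{thm:iterated} (or splits the power $(\mathbf{G}\mu)^{\gamma+q}$) so that the iterated inequality is applied only in its admissible range, and when $\gamma\ge 1$ one replaces the linearization step of Theorem~\ref{risezmeasure} by a direct H\"older splitting. In every case the crux is the passage from the cross-measure term to the diagonal energies, and it is at that single step that both hypotheses \eqref{greenpotentialwithsigma} and \eqref{greenpotentialwithmu} are consumed.
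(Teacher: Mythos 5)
Your opening is sound and is the natural first half of the argument: for $\gamma+q\ge 1$, the iterated inequality \eqref{iterated} with $t=\gamma+q$, Fubini with the symmetry of $G$, and H\"older with exponents $\tfrac{\gamma}{\gamma+q-1}$, $\tfrac{\gamma}{1-q}$ correctly give
\[
\int_\Omega(\mathbf{G}\mu)^{\gamma+q}\,d\sigma\;\le\; C\,\Big(\mathcal{E}_\gamma[\mu]\Big)^{\frac{\gamma+q-1}{\gamma}}\Big(\int_\Omega(\mathbf{G}\sigma)^{\frac{\gamma}{1-q}}\,d\mu\Big)^{\frac{1-q}{\gamma}},
\]
and you correctly isolate the cross term as the whole difficulty. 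But the step that is supposed to kill that cross term is asserted, not proved, and as described it cannot work. The weighted norm inequality \eqref{weightednormth} only bounds potentials of the form $\mathbf{G}(f\,d\sigma)$, i.e.\ of measures absolutely continuous with respect to $\sigma$; the measures $\mu$ (and any Riesz measure $\omega$ you introduce) may be mutually singular with $\sigma$, and Fubini/symmetry merely exchanges which measure sits inside $\mathbf{G}$ --- it never converts an integral against $d\mu$ into one against $d\sigma$. Concretely, if you apply \eqref{iterated} a second time to $\int_\Omega(\mathbf{G}\sigma)^{\gamma/(1-q)}\,d\mu$ and then H\"older with exponents $\tfrac{\gamma+q}{\gamma+q-1}$, $\gamma+q$, what appears is $\big(\mathcal{E}_{\frac{\gamma+q}{1-q}}[\sigma]\big)^{\frac{\gamma+q-1}{\gamma+q}}\big(\int_\Omega(\mathbf{G}\mu)^{\gamma+q}d\sigma\big)^{\frac{1}{\gamma+q}}$ --- the unknown quantity itself. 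So your scheme produces an inequality of the form $A\le C\,\mathcal{E}_\gamma[\mu]^{a}\,\mathcal{E}_{\frac{\gamma+q}{1-q}}[\sigma]^{b}\,A^{c}$ with $c<1$, which is circular unless one can absorb $A^{c}$ into the left-hand side; that absorption requires knowing $A<\infty$ a priori, and this is precisely the missing idea. The standard remedy is an approximation step (estimate first with $\sigma$ restricted to compact sets on which $\mathbf{G}\mu$ is bounded, with constants independent of the approximation, then pass to the limit by monotone convergence); nothing in your sketch plays this role, and \eqref{weightednormth} cannot substitute for it.

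Two further points. First, your linearization is the wrong one and is used outside its scope: Theorem~\ref{risezmeasure} applies to $(\mathbf{G}\mu)^{\gamma}$ only when $\gamma<1$ and yields $\mathcal{E}_1[\omega]<\infty$, an energy index that does not match the index $\frac{\gamma+q}{1-q}$ in hypothesis \eqref{greenpotentialwithsigma}. The substitution that actually symmetrizes the problem is $(\mathbf{G}\mu)^{1-q}=\mathbf{G}\omega$, for which Lemma~\ref{mainlemma} gives $\mathcal{E}_{\frac{\gamma+q}{1-q}}[\omega]\le C\,\mathcal{E}_\gamma[\mu]$, reducing \eqref{greenpotwithsigma} to a mixed-energy bound in which both measures carry the same exponent $s=\frac{\gamma+q}{1-q}$; but even then the bootstrap-plus-absorption issue above remains (note that in Lemma~\ref{mainlemma} no absorption is needed only because $\mathbf{G}\omega=(\mathbf{G}\mu)^{1-q}$ holds pointwise, so the cross term collapses into $\mathcal{E}_\gamma[\mu]$ --- a collapse unavailable for two unrelated measures $\sigma,\mu$). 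Second, the remaining ranges ($\gamma+q<1$, and $\gamma\ge1$ where your linearization is inadmissible) are only gestured at. For comparison, the paper does not reprove this statement at all: it quotes it as \cite{SV2}, Lemma~4.3, whose argument supplies exactly the closing mechanism your proposal leaves open.
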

The next results are essential lemmas to prove positive solutions to \eqref{classicallaplacian} when $\mu=0.$
The complete proofs of the following two lemmas can be found in \cite[Lemma 4.1 and Lemma 4.2]{SV3}.
\begin{Lem} [\text See \cite{SV3}]   \label{weightedlemma}
Let $0<q<1,$ and let $\sigma\in\mathcal{M}^+(\Omega)$ with $\sigma\not\equiv 0.$ Suppose $G$ is a positive Green function associated with $\mathcal{L}$ on $\Omega.$ Suppose that $\frac{n}{n-2}<p<\infty$ and the condition 
\be \label{neccond}
\mathbf{G}\sigma\in L^{\frac{p}{1-q}}(\Omega,dx)
 \ee
 is valid. Then
\[
\|\mathbf{G}(gd\sigma)\|_{L^p(\Omega,dx)}\leq c\|\mathbf{G}\sigma\|^{\frac{1}{s'}}_{L^\frac{p}{1-q}(\Omega,dx)} \|f\|_{L^s(\Omega,d\sigma)},\quad f\in L^s(\Omega,d\sigma),
\]
where $c$ is a positive constant independent of $f$ and $q$ and $s=\frac{p(n-2)-n(1-q)}{nq}.$
\end{Lem}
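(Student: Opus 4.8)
The plan is to prove the asserted inequality by \emph{duality}, reducing it to a trace-type estimate for $\mathbf{G}(h\,dx)$, which can then be handled by the iterated pointwise inequality of Theorem~\ref{thm:iterated} together with the classical Sobolev (Hardy--Littlewood--Sobolev) bound for $\mathbf{G}$. Throughout I may assume $f\ge 0$ (otherwise replace $f$ by $|f|$), and I abbreviate $\gamma:=\frac{p(n-2)-n}{n}>0$, so that $s=\frac{\gamma+q}{q}$, its Hölder conjugate is $s'=\frac{\gamma+q}{\gamma}$, and (writing $p'$ for the conjugate of $p$) one has the identity $\frac{1}{p'}-\frac{2}{n}=\frac{\gamma}{p}$, which will be the arithmetic heart of the matching of exponents.

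First I would dualize the $L^p(\Omega,dx)$ norm: since $\mathbf{G}(fd\sigma)\ge 0$,
\[
\|\mathbf{G}(fd\sigma)\|_{L^p(\Omega,dx)}=\sup\Big\{\int_\Omega \mathbf{G}(fd\sigma)\,h\,dx:\ h\ge 0,\ \|h\|_{L^{p'}(\Omega,dx)}\le 1\Big\}.
\]
Using the symmetry of $G$ and Tonelli's theorem, $\int_\Omega \mathbf{G}(fd\sigma)\,h\,dx=\int_\Omega f\,\mathbf{G}(hdx)\,d\sigma$, and Hölder's inequality in $d\sigma$ with exponents $s,s'$ gives
\[
\int_\Omega f\,\mathbf{G}(hdx)\,d\sigma\le \|f\|_{L^s(\Omega,d\sigma)}\,\|\mathbf{G}(hdx)\|_{L^{s'}(\Omega,d\sigma)}.
\]
Thus it suffices to establish the trace estimate
\[
\|\mathbf{G}(hdx)\|_{L^{s'}(\Omega,d\sigma)}\le c\,\|\mathbf{G}\sigma\|_{L^{\frac{p}{1-q}}(\Omega,dx)}^{1/s'}\,\|h\|_{L^{p'}(\Omega,dx)},\qquad h\ge 0,
\]
after which taking the supremum over $\|h\|_{L^{p'}}\le1$ yields the lemma.

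To prove this trace estimate I would linearize the power $s'\ge 1$ by applying Theorem~\ref{thm:iterated}(i) to the measure $h\,dx$ with $t=s'$:
\[
\big(\mathbf{G}(hdx)\big)^{s'}\le s'\,\mathbf{G}\big((\mathbf{G}(hdx))^{s'-1}h\,dx\big).
\]
Integrating against $d\sigma$ and using Tonelli together with the symmetry of $G$ moves the outer potential onto $\sigma$:
\[
\int_\Omega \big(\mathbf{G}(hdx)\big)^{s'}d\sigma\le s'\int_\Omega (\mathbf{G}(hdx))^{s'-1}\,(\mathbf{G}\sigma)\,h\,dx.
\]
Then I would apply Hölder's inequality to the three factors with exponents $\big(\tfrac{p}{q},\tfrac{p}{1-q},p'\big)$, whose reciprocals sum to $1$; this is exactly the split producing the prescribed norm $\|\mathbf{G}\sigma\|_{L^{p/(1-q)}(dx)}$ in the middle (finite by hypothesis \eqref{neccond}), $\|h\|_{L^{p'}(dx)}$ on the right, and $\|\mathbf{G}(hdx)\|_{L^{p/\gamma}(dx)}^{s'-1}$ on the left, since $(s'-1)\tfrac{p}{q}=\tfrac{p}{\gamma}$. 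Finally, because $\tfrac{1}{p'}-\tfrac{2}{n}=\tfrac{\gamma}{p}$ and $1<p'<\tfrac n2$ (the latter being precisely the hypothesis $p>\tfrac{n}{n-2}$), the Green function bound $G(x,y)\le c|x-y|^{2-n}$ and the Hardy--Littlewood--Sobolev inequality give the strong-type estimate $\|\mathbf{G}(hdx)\|_{L^{p/\gamma}(dx)}\le c\,\|h\|_{L^{p'}(dx)}$ with $\tfrac{p}{\gamma}=(p')^{*}$. Absorbing this and taking $s'$-th roots yields the trace estimate with a constant depending only on $n,p$ and on $q$ only through the uniformly bounded factor $(s')^{1/s'}$.

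The main obstacle, and the reason the iterated inequality rather than a one-step Hölder argument is essential, is the borderline nature of the Sobolev exponent: a naive pointwise factorization $\mathbf{G}(fd\sigma)\le(\mathbf{G}\sigma)^{1/s'}(\mathbf{G}(f^{s}d\sigma))^{1/s}$ followed by Hölder in $dx$ forces one to bound $\|\mathbf{G}(f^{s}d\sigma)\|_{L^{n/(n-2)}(dx)}$, and at the endpoint $n/(n-2)$ the Green operator is only of weak type (point masses already fail), so no strong bound by $\|f\|_{L^s(d\sigma)}$ can hold. The dualize--then--linearize route circumvents this by invoking the Sobolev inequality only for $\mathbf{G}(hdx)$ at the safe, non-endpoint pair $(p',(p')^{*})$ with $p'>1$; verifying the exponent identities so that the three-term Hölder split lands exactly on $(p')^{*}=p/\gamma$ and on the prescribed weight norm is the one computation that must be carried out with care, and the uniform-in-$q$ control of the constant then follows since $\gamma$, $p'$, and the Sobolev constant are all independent of $q$.
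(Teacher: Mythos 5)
The paper itself gives no proof of this lemma: it is imported verbatim from the cited source (\cite{SV3}, Lemma 4.1), so there is no internal argument to compare against; your proposal must therefore stand on its own, and it does. The duality reduction, the linearization of the power $s'=\frac{\gamma+q}{\gamma}\ge 1$ via Theorem \ref{thm:iterated}(i) applied to the measure $h\,dx$, the three-factor H\"older split with exponents $\bigl(\tfrac{p}{q},\tfrac{p}{1-q},p'\bigr)$, and the final Hardy--Littlewood--Sobolev bound at the pair $\bigl(p',\tfrac{p}{\gamma}\bigr)$ are all legitimate, and the exponent arithmetic checks out: $s=\tfrac{\gamma+q}{q}$, $(s'-1)\tfrac{p}{q}=\tfrac{p}{\gamma}$, $\tfrac{1}{p'}-\tfrac{2}{n}=\tfrac{\gamma}{p}$, and $1<p'<\tfrac n2$ is exactly $\tfrac{n}{n-2}<p<\infty$. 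The two facts you use beyond the paper's stated preliminaries --- symmetry of $G$ (valid here since $\mathcal{A}$ is assumed real symmetric) and the bound $G(x,y)\le c|x-y|^{2-n}$ (the Littman--Stampacchia--Weinberger estimate plus domain monotonicity, which the paper itself tacitly invokes in the proof of Corollary \ref{corollary} when passing from $\mathbf{G}\sigma$ to the Riesz potential of $\tilde\sigma$) --- are standard in this setting, and your bookkeeping of the constant, showing $q$ enters only through the uniformly bounded factors $(s')^{1/s'}$ and $C^{1/s}$, justifies the claimed independence of $q$. This duality-plus-iterated-inequality route is in fact the same strategy used in the cited source, and your closing remark correctly identifies why the naive pointwise factorization $\mathbf{G}(fd\sigma)\le(\mathbf{G}(f^s d\sigma))^{1/s}(\mathbf{G}\sigma)^{1/s'}$ cannot work: it forces the endpoint exponent $\tfrac{p}{1+\gamma}=\tfrac{n}{n-2}$, where only a weak-type bound holds.
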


\begin{Lem} [\text See \cite{SV3}] \label{lemma2}
Let $0<q<1,$ and let $\sigma\in\mathcal{M}^+(\Omega)$ with $\sigma\not\equiv 0.$ Suppose $G$ is a positive Green function associated with $\mathcal{L}$ on $\Omega$. If $\frac{n}{n-2}<p<\infty,$ then \eqref{greenpotentialwithsigma} with $\gamma=\frac{p(n-2)-n}{n}$ implies \eqref{neccond}. In fact, 
\[
\|\mathbf{G}\sigma\|_{L^{\frac{p}{1-q}}(\Omega,dx)}\leq \tilde{C} \big\|\mathbf{G}\sigma\big\|_{L^{\frac{\gamma+q}{1-q}}(\Omega,d\sigma)}^{\frac{\gamma+q}{\gamma+1}}
\]
where $\tilde{C}$ is a positive constant depending on $\gamma$ and $q$.
\end{Lem}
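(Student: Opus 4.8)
The plan is to reduce the estimate to a single generalized energy--Sobolev inequality for Green potentials and then prove that inequality by a Dirichlet-energy computation together with the classical Sobolev embedding.

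I would first dispose of the exponent bookkeeping. Set $\beta:=\frac{\gamma+q}{1-q}$, so that hypothesis \eqref{greenpotentialwithsigma} reads $\mathcal{E}_{\beta}[\sigma]=\int_\Omega(\mathbf{G}\sigma)^{\beta}\,d\sigma<\infty$. From $\gamma=\frac{p(n-2)-n}{n}$ one has $\gamma+1=\frac{p(n-2)}{n}$, whence
\[
\beta+1=\frac{\gamma+1}{1-q}=\frac{p(n-2)}{n(1-q)},\qquad \frac{n(\beta+1)}{n-2}=\frac{p}{1-q},\qquad \frac{1}{\beta+1}=\frac{1-q}{\gamma+1}.
\]
Using $\|\mathbf{G}\sigma\|_{L^{\beta}(\Omega,d\sigma)}=\mathcal{E}_{\beta}[\sigma]^{1/\beta}$ and the last identity, the conclusion of the lemma is seen to be precisely
\[
\|\mathbf{G}\sigma\|_{L^{p/(1-q)}(\Omega,dx)}\le \tilde{C}\,\mathcal{E}_{\beta}[\sigma]^{\frac{1}{\beta+1}},
\]
which is the special case $\omega=\sigma$ of the inequality $\|\mathbf{G}\omega\|_{L^{n(\beta+1)/(n-2)}(\Omega,dx)}\le C\,\mathcal{E}_{\beta}[\omega]^{1/(\beta+1)}$. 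It therefore suffices to prove this inequality for the single index $\beta$.

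To do so I would set $w:=(\mathbf{G}\sigma)^{(\beta+1)/2}$ and evaluate its Dirichlet energy. Testing the distributional equation $\mathcal{L}(\mathbf{G}\sigma)=\sigma$ against $(\mathbf{G}\sigma)^{\beta}$ and using $\nabla w=\tfrac{\beta+1}{2}(\mathbf{G}\sigma)^{(\beta-1)/2}\nabla(\mathbf{G}\sigma)$ gives
\[
\int_\Omega \mathcal{A}(x)\nabla w\cdot\nabla w\,dx=\frac{(\beta+1)^2}{4\beta}\int_\Omega(\mathbf{G}\sigma)^{\beta}\,d\sigma=\frac{(\beta+1)^2}{4\beta}\,\mathcal{E}_{\beta}[\sigma].
\]
Since $\mathbf{G}\sigma$ satisfies $\liminf_{x\to y}\mathbf{G}\sigma(x)=0$ on $\partial_\infty\Omega$, the function $w$ has zero boundary values in the Sobolev sense; extending $w$ by zero to $\R^n$ and applying the classical Sobolev inequality gives $\|w\|_{L^{2^*}(\Omega,dx)}^2\le C_n\|\nabla w\|_{L^2(\Omega,dx)}^2$ with $2^*=\frac{2n}{n-2}$, while uniform ellipticity yields $\|\nabla w\|_{L^2}^2\le m^{-1}\int_\Omega\mathcal{A}(x)\nabla w\cdot\nabla w\,dx$. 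Finally $\tfrac{\beta+1}{2}\cdot 2^*=\frac{n(\beta+1)}{n-2}$ gives $\|w\|_{L^{2^*}}^2=\|\mathbf{G}\sigma\|_{L^{n(\beta+1)/(n-2)}(\Omega,dx)}^{\beta+1}$, and chaining the three displays produces the claimed inequality with a constant $\tilde{C}=\tilde{C}(n,m,\gamma,q)$.

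The step I expect to be the main obstacle is the rigorous justification of the energy identity, because $\mathbf{G}\sigma$ is an unbounded $\mathcal{A}$-superharmonic function that may be $+\infty$ on $\mathrm{supp}\,\sigma$, so $(\mathbf{G}\sigma)^{\beta}$ is not an admissible test function a priori. I would proceed by truncation, testing the equation against $u_k^{\beta}$ with $u_k:=\min(\mathbf{G}\sigma,k)$ to obtain $\int_\Omega\mathcal{A}(x)\nabla w_k\cdot\nabla w_k\,dx\le\frac{(\beta+1)^2}{4\beta}\int_\Omega u_k^{\beta}\,d\sigma$ for $w_k:=u_k^{(\beta+1)/2}$, and then letting $k\to\infty$ by monotone convergence on the right and lower semicontinuity of the energy on the left; the hypothesis $\mathcal{E}_\beta[\sigma]<\infty$ keeps every quantity finite and secures $w\in W^{1,2}_0(\Omega)$. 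Controlling the limit when $0<\beta<1$, where $(\mathbf{G}\sigma)^{(\beta-1)/2}$ is singular, is the delicate point; this is exactly the type of energy estimate established in \cite{SV2} (cf.\ Theorem \ref{risezmeasure}), whose argument I would follow to complete the proof.
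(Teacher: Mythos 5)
You should first note that the paper itself contains no argument for this lemma: it is imported verbatim from \cite{SV3} (Lemma 4.2 there), so the only comparison available is with the energy-space mechanism that underlies that reference. Measured against that, your reduction is exactly right: the exponent bookkeeping ($\beta+1=\frac{\gamma+1}{1-q}$, $\frac{n(\beta+1)}{n-2}=\frac{p}{1-q}$, $\frac{1}{\beta+1}=\frac{1-q}{\gamma+1}$) is correct, the restatement of the conclusion as $\|\mathbf{G}\sigma\|_{L^{p/(1-q)}(\Omega,dx)}\le \tilde{C}\,\mathcal{E}_{\beta}[\sigma]^{1/(\beta+1)}$ with $\beta=\frac{\gamma+q}{1-q}$ is correct, and the formal chain (test $\mathcal{L}(\mathbf{G}\sigma)=\sigma$ against $(\mathbf{G}\sigma)^{\beta}$, obtain $\int_\Omega\mathcal{A}\nabla w\cdot\nabla w\,dx=\frac{(\beta+1)^2}{4\beta}\mathcal{E}_\beta[\sigma]$ for $w=(\mathbf{G}\sigma)^{(\beta+1)/2}$, then Sobolev plus uniform ellipticity) is the genuinely correct mechanism; it is essentially how the generalized energy $\mathcal{E}_\gamma$ is exploited in \cite{SV2, SV3}. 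Note also that the ``soft'' alternative one might hope for --- Theorem \ref{thm:iterated}(i) with $t=\frac{\gamma+1}{1-q}$ combined with the weak-type $(1,\tfrac{n}{n-2})$ bound for the Green potential --- only yields $\mathbf{G}\sigma$ in \emph{weak} $L^{p/(1-q)}(\Omega,dx)$, so some version of your energy argument is unavoidable for the strong norm.

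The genuine gap is the step where you feed $w$ into the Sobolev inequality. The assertion ``since $\mathbf{G}\sigma$ satisfies $\liminf_{x\to y}\mathbf{G}\sigma(x)=0$ on $\partial_\infty\Omega$, the function $w$ has zero boundary values in the Sobolev sense'' is invalid on two counts: nothing in the hypotheses of the lemma gives this $\liminf$ behavior for an arbitrary $\sigma$ satisfying \eqref{greenpotentialwithsigma} (Green potentials can fail to vanish at irregular boundary points, which exist for the arbitrary domains allowed here), and even a pointwise $\liminf$ condition never implies membership in $W^{1,2}_0(\Omega)$ or that the zero extension has a distributional gradient in $L^2(\R^n)$ without a jump part on $\partial\Omega$. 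Your truncation program repairs the \emph{other} difficulty (justifying the integration by parts for the possibly unbounded $\mathcal{A}$-superharmonic function $\mathbf{G}\sigma$, including the singular weight $u^{\beta-1}$ when $\beta<1$), but it does not touch this one: monotone convergence and lower semicontinuity of the energy give a uniform bound on $\|\nabla w_k\|_{L^2(\Omega)}$, yet to apply the Sobolev inequality to $w_k=\min(\mathbf{G}\sigma,k)^{(\beta+1)/2}$ you still need to know that the zero extension of $w_k$ lies in $\dot{W}^{1,2}(\R^n)$, i.e.\ that truncated Green potentials vanish on $\partial\Omega$ in the Sobolev (capacitary) sense. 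This is a true but nontrivial Deny-type theorem of potential theory (finite-energy potentials belong to the Dirichlet space $\dot{W}^{1,2}_0(\Omega)$), and it cannot be bypassed by comparison with the Green function of $\R^n$, since $\mathcal{E}_\beta[\sigma]$ computed with $G_\Omega$ is \emph{smaller} than the corresponding $\R^n$ energy, so the hypothesis goes the wrong way. Once that fact is quoted or proved --- which is precisely the kind of input supplied by \cite{SV2} --- your argument closes; as written, it is an outline with its most domain-sensitive step asserted rather than established.
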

 \section{Construction of minimal $L^p$-solutions} \label{sec3}
 In this section, we prove our main result stated Theorem \ref{thm:classical} and its
 consequence in Corollary \ref{corollary}.
 
The following lemma is one of the key ingredients in our approach.
 \begin{Lem}\label{mainlemma}
 Let $0<\gamma<\infty$ and $0<q<1.$ Suppose $G$ is a positive Green function associated with $\mathcal{L}$ in $\Omega \subset  \R^n, n\geq 3$. Let $\mu\in \mathcal{M}^+(\Omega)$ such that $\mathbf{G}\mu\not\equiv \infty.$ If $\omega\in\mathcal{M}^+(\Omega)$ is a Riesz measure of the $\mathcal{A}$-superharmonic function $(\mathbf{G}\mu)^{1-q},$ then 
 \[
 \mathcal{E}_{\frac{\gamma+q}{1-q}}[\omega]\leq C  \mathcal{E}_\gamma [\mu]
 \] 
 where $C$ is a positive constant depending on $\gamma$ and $q.$
 \end{Lem}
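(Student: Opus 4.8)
The plan is to reduce the generalized energy of $\omega$ to a plain power integral against $\mu$, using the Riesz representation of the transformed potential together with the symmetry of the Green kernel. First I would invoke Theorem~\ref{risezmeasure} with exponent $1-q\in(0,1)$ in place of its $\gamma$: since $\mathbf{G}\mu\not\equiv\infty$, the function $w:=(\mathbf{G}\mu)^{1-q}$ is $\mathcal{A}$-superharmonic and coincides with the potential of its own Riesz measure, i.e. $\mathbf{G}\omega=(\mathbf{G}\mu)^{1-q}$. Substituting this into the definition of the energy rewrites the left-hand side purely in terms of $\mathbf{G}\mu$:
\[
\mathcal{E}_{\frac{\gamma+q}{1-q}}[\omega]=\int_\Omega(\mathbf{G}\omega)^{\frac{\gamma+q}{1-q}}\,d\omega=\int_\Omega(\mathbf{G}\mu)^{\gamma+q}\,d\omega,
\]
so the problem becomes one of estimating the mixed integral $\int(\mathbf{G}\mu)^{\gamma+q}\,d\omega$ by $\int(\mathbf{G}\mu)^{\gamma}\,d\mu$.

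The engine of the estimate is the iterated inequality of Theorem~\ref{thm:iterated}. Applying part~(i) to $\sigma=\mu$ with exponent $t=\gamma+q$ gives the pointwise bound $(\mathbf{G}\mu)^{\gamma+q}\le(\gamma+q)\,\mathbf{G}\!\big((\mathbf{G}\mu)^{\gamma+q-1}d\mu\big)$. Integrating this against $\omega$ and then swapping the order of integration by the symmetry of $G$ (Fubini), the inner integral reproduces $\mathbf{G}\omega=(\mathbf{G}\mu)^{1-q}$, so that
\[
\int_\Omega(\mathbf{G}\mu)^{\gamma+q}\,d\omega\le(\gamma+q)\int_\Omega(\mathbf{G}\mu)^{\gamma+q-1}(\mathbf{G}\mu)^{1-q}\,d\mu=(\gamma+q)\int_\Omega(\mathbf{G}\mu)^{\gamma}\,d\mu.
\]
The exponent arithmetic $(\gamma+q-1)+(1-q)=\gamma$ is exactly what collapses the two factors back to $\mathcal{E}_\gamma[\mu]$, yielding the claim with $C=\gamma+q$.

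The main obstacle is that Theorem~\ref{thm:iterated}(i) requires $t\ge1$, so the argument above covers only $\gamma\ge1-q$; in the complementary regime $\gamma+q<1$, which is essential for the application to Theorem~\ref{thm:classical}, only the reversed inequality of part~(ii) is available, and it controls $(\mathbf{G}\mu)^{\gamma+q}$ from below rather than above. To close this case I would use Theorem~\ref{risezmeasure} a second time, now with exponent $\gamma+q\in(0,1)$, to realize $(\mathbf{G}\mu)^{\gamma+q}=\mathbf{G}\kappa$ as an exact Green potential of its Riesz measure $\kappa$; the symmetry swap then converts $\int(\mathbf{G}\mu)^{\gamma+q}\,d\omega$ into $\int(\mathbf{G}\mu)^{1-q}\,d\kappa$, which I would compare against $\mathcal{E}_\gamma[\mu]$ through the generalized energy identity underlying Theorem~\ref{risezmeasure}. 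Equivalently, the chain-rule computation $\int(\mathbf{G}\mu)^{\gamma+q}\,d\omega=(1-q)(\gamma+q)\int(\mathbf{G}\mu)^{\gamma-1}\,\mathcal{A}\nabla(\mathbf{G}\mu)\cdot\nabla(\mathbf{G}\mu)\,dx$, followed by an integration by parts against $\mathcal{L}(\mathbf{G}\mu)=\mu$, gives for every $\gamma>0$ at once the sharp constant $C=\frac{(1-q)(\gamma+q)}{\gamma}$; the delicate point is to justify these manipulations for $\mathcal{A}$-superharmonic functions with merely measure data, which is precisely where the energy framework of \cite{SV2} behind Theorem~\ref{risezmeasure} enters.
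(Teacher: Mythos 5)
Your argument for the regime $\gamma+q\ge 1$ is correct and is essentially the paper's own proof of that case: identify $\mathbf{G}\omega=(\mathbf{G}\mu)^{1-q}$, apply Theorem~\ref{thm:iterated}(i) with $t=\gamma+q$, integrate against $d\omega$, and swap the order of integration. (Your version is in fact slightly cleaner, since after Fubini the substitution $\mathbf{G}\omega=(\mathbf{G}\mu)^{1-q}$ collapses the integral at once; the paper's intermediate H\"older step there is an identity anyway.)

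The genuine gap is in the case $\gamma+q<1$, which you correctly flag as the problematic one. Your first route makes no progress: rewriting $\int_\Omega(\mathbf{G}\mu)^{\gamma+q}\,d\omega$ as $\int_\Omega(\mathbf{G}\mu)^{1-q}\,d\kappa$ via the Riesz measure $\kappa$ of $(\mathbf{G}\mu)^{\gamma+q}$ is just the same mutual energy $\int\mathbf{G}\kappa\,d\omega=\int\mathbf{G}\omega\,d\kappa$ written two ways, and the tool you then invoke to finish, ``the generalized energy identity underlying Theorem~\ref{risezmeasure},'' is not available: as stated (and as used in this paper), Theorem~\ref{risezmeasure} is purely qualitative --- $\mathcal{E}_\gamma[\mu]<\infty$ if and only if $\mathcal{E}_1[\omega]<\infty$ --- and supplies no inequality with a constant. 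Your fallback, the chain-rule/integration-by-parts identity with constant $\frac{(1-q)(\gamma+q)}{\gamma}$, is formally consistent but is exactly the kind of computation this potential-theoretic framework is built to avoid, and it cannot be justified at the stated level of generality without substantial extra work: for measure data (e.g.\ $\mu$ with atoms) $\nabla\mathbf{G}\mu$ is not locally square integrable and the weighted Dirichlet integral $\int_\Omega(\mathbf{G}\mu)^{\gamma-1}\mathcal{A}\nabla(\mathbf{G}\mu)\cdot\nabla(\mathbf{G}\mu)\,dx$ can diverge, and the integration by parts against $\mathcal{L}(\mathbf{G}\mu)=\mu$ requires $(\mathbf{G}\mu)^{\gamma}$ to be an admissible test function, which demands a truncation argument you do not supply. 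Deferring all of this to ``the energy framework of \cite{SV2}'' leaves the crucial case unproven.

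The idea you are missing is elementary and self-contained. With $a=\gamma+q$, the paper inserts $F^{a-1}F^{1-a}$ with $F=(\mathbf{G}\omega)^{\frac{\gamma+q}{1-q}}$ and applies H\"older with exponents $\frac1a$, $\frac1{1-a}$, so that the factor $\bigl(\int_\Omega F\,d\omega\bigr)^{1-a}=\bigl(\mathcal{E}_{\frac{\gamma+q}{1-q}}[\omega]\bigr)^{1-a}$ is absorbed into the left-hand side, leaving
\[
\mathcal{E}_{\frac{\gamma+q}{1-q}}[\omega]\le\int_\Omega(\mathbf{G}\mu)\,(\mathbf{G}\omega)^{\frac{\gamma+q-1}{1-q}}\,d\omega .
\]
After Fubini this equals $\int_\Omega \mathbf{G}\bigl((\mathbf{G}\omega)^{\frac{\gamma+q-1}{1-q}}d\omega\bigr)\,d\mu$, and now the \emph{reverse} iterated inequality \eqref{iteratedgeq} is applied to $\omega$ (not to $\mu$) with $t=\frac{\gamma}{1-q}\le 1$: the lower bound for $(\mathbf{G}\omega)^{t}$ is precisely an upper bound for $\mathbf{G}\bigl((\mathbf{G}\omega)^{t-1}d\omega\bigr)$, the term Fubini produces, giving
\[
\mathcal{E}_{\frac{\gamma+q}{1-q}}[\omega]\le\frac{1-q}{\gamma}\int_\Omega(\mathbf{G}\omega)^{\frac{\gamma}{1-q}}\,d\mu=\frac{1-q}{\gamma}\,\mathcal{E}_\gamma[\mu].
\]
That duality step --- converting the reversed pointwise inequality for $\mathbf{G}\omega$ into the needed upper bound under the $d\mu$-integral, after a H\"older absorption --- keeps everything at the level of kernel inequalities and is what your proposal lacks for $\gamma+q<1$.
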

 \begin{proof}
 We have $w:=(\mathbf{G}\mu)^{1-q}$ is a positive $\mathcal{A}$-superharmonic function in $\Omega$, and $w:=\mathbf{G}\omega$ where $\omega\in\mathcal{M}^+(\Omega)$ is a Riesz measure of $w.$
Consider two cases as follows:
 \begin{itemize}
\item Case:$\gamma+q> 1$.
\end{itemize}
Applying the iterated inequality \eqref{iterated} with $t=\gamma+q$, together with Fubini's theorem and H\"{o}lder's inequality with the exponents $\frac{\gamma}{\gamma+q-1}$ and $\frac{\gamma}{1-q},$ we obtain
\begin{align*}
\mathcal{E}_{\frac{\gamma+q}{1-q}}[\omega]=\int_{\Omega}(\mathbf{G}\omega)^{\frac{\gamma+q}{1-q}}d\omega&=\int_{\Omega} (\mathbf{G}\mu)^{\gamma+q}d\omega \\&\leq C\int_{\Omega}\mathbf{G}((\mathbf{G}\mu)^{\gamma+q-1}d\mu)d\omega \\
&\leq C\Big(\int_{\Omega} (\mathbf{G}\mu)^{\gamma}d\mu\Big)^{\frac{\gamma+q-1}{\gamma} }\Big(\int_\Omega (\mathbf{G}\omega)^\frac{\gamma}{1-q}d\mu\Big)^ {\frac{1-q}{\gamma}}\\
&=C \int_{\Omega}(\mathbf{G}\mu)^\gamma d\mu \\ 
&= C \mathcal{E}_\gamma [\mu]
\end{align*} 
\begin{itemize}
\item Case:$\gamma+q\leq1$.
\end{itemize}
Write \[
\int_{\Omega}(\mathbf{G}\mu)^{\gamma+q}d\omega=\int_{\Omega}(\mathbf{G}\mu)^{\gamma+q} F^{a-1} F^{1-a} d\omega,
\] where $a=\gamma+q$ and $F$ is a positive $\omega$-measurable function to be determined later. Applying H\"{o}lder's inequality with the exponents $\frac{1}{a}$ and $\frac{1}{1-a}$, we get
\begin{align*}
\mathcal{E}_{\frac{\gamma+q}{1-q}}[\omega]=\int_{\Omega}(\mathbf{G}\omega)^{\frac{\gamma+q}{1-q}}d\omega
&=\int_{\Omega}(\mathbf{G}\mu)^{\gamma+q} F^{a-1} F^{1-a} d\omega\\
&\leq  \Big( \int_{\Omega}  (\mathbf{G}\mu)^{\frac{\gamma+q}{a}} F^{\frac{a-1}{a}} d\omega \Big)^{a} \Big(\int_{\Omega} F d\omega \Big)^{1-a}
\end{align*}
Setting $F=(\mathbf{G}\omega)^{\frac{\gamma+q}{1-q}}$. 
\be\label{a}
\Big(\mathcal{E}_{\frac{\gamma+q}{1-q}}[\omega]\Big)^{\gamma+q}\leq \Big( \int_{\Omega} (\mathbf{G}\mu)\big(\mathbf{G}\omega \big)^{(\frac{\gamma+q}{1-q})(\frac{\gamma+q-1}{\gamma+q})}d\omega \Big)^{\gamma+q}
\ee
The right-hand side of \eqref{a} is estimated by using Fubini's theorem, followed by inequality \eqref{iteratedgeq} with $t=\frac{\gamma}{1-q}$
\begin{align*}
\mathcal{E}_{\frac{\gamma+q}{1-q}}[\omega]
\leq\int_{\Omega} \mathbf{G} ( \big( \mathbf{G}\omega \big)^{\frac{\gamma+q-1}{1-q}}d\omega)d\mu
&\leq C\int_{\Omega} (\mathbf{G}\omega)^{\frac{\gamma}{1-q}}d\mu\\
&=C\int_{\Omega} (\mathbf{G}\mu)^\gamma d\mu \\ 
&=\mathcal{E}_{\gamma}[\mu].
\end{align*}
This completes the proof of the lemma.
 \end{proof}

 The following lemma gives Green potentials norm estimates in terms of generalized energy.
 
\begin{Lem}\label{lemmaformu}
Let $G$ be a positive Green function associated with $\mathcal{L}$ in $\Omega \subset  \R^n$. Let $\mu \in \M^{+}(\Omega)$ such that $\mathbf{G}\mu\not\equiv\infty.$ Then, for $0<\gamma<\infty,$ 
$\mathbf{G}\mu\in L^{\gamma} (\Omega, d\mu)$  implies $\mathbf{G}\mu\in L^{p}(\Omega,dx)$, i.e.,
\[
\|\mathbf{G}\mu\|_{L^p(\Omega,dx)}\leq c \Big( \mathcal{E}_\gamma [\mu]\Big)^\frac{1}{\gamma+1}
\]
where $p=\frac{n(1+\gamma)}{n-2}$ and $c$ is a positive constant depending on $\gamma$.\end{Lem}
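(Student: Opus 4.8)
The plan is to bypass the potential-theoretic iterated inequalities entirely and instead exploit the differential structure of $u:=\mathbf{G}\mu$, which solves $\mathcal{L}u=\mu$ in the distributional sense (its Riesz measure is $\mu$), together with the classical Sobolev inequality. The starting point is pure exponent bookkeeping: with $p=\frac{n(1+\gamma)}{n-2}$ and $v:=u^{\frac{1+\gamma}{2}}$ one has $v^{\frac{2n}{n-2}}=u^{p}$, so that, after extending $v$ by zero outside $\Omega$,
\[
\int_\Omega (\mathbf{G}\mu)^p\,dx=\|v\|_{L^{\frac{2n}{n-2}}(\R^n)}^{\frac{2n}{n-2}}.
\]
I would then apply the Sobolev inequality $\|v\|_{L^{2n/(n-2)}(\R^n)}\le C_n\|\nabla v\|_{L^2(\R^n)}$, which reduces the whole estimate to a bound on the Dirichlet integral of $v$.

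For that bound, the chain rule gives $|\nabla v|^2=\frac{(1+\gamma)^2}{4}\,u^{\gamma-1}|\nabla u|^2$, so it suffices to control the weighted energy $\int_\Omega u^{\gamma-1}|\nabla u|^2\,dx$. Here I would test the equation $\mathcal{L}u=\mu$ against $\phi=u^{\gamma}$: since $\nabla\phi=\gamma u^{\gamma-1}\nabla u$, the weak formulation yields
\[
\gamma\int_\Omega u^{\gamma-1}\,\mathcal{A}\nabla u\cdot\nabla u\,dx=\int_\Omega u^{\gamma}\,d\mu=\mathcal{E}_\gamma[\mu],
\]
and uniform ellipticity ($\mathcal{A}\xi\cdot\xi\ge m|\xi|^2$) gives $\int_\Omega u^{\gamma-1}|\nabla u|^2\,dx\le\frac{1}{m\gamma}\,\mathcal{E}_\gamma[\mu]$, hence $\|\nabla v\|_{L^2}^2\le\frac{(1+\gamma)^2}{4m\gamma}\,\mathcal{E}_\gamma[\mu]$. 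Feeding this into the Sobolev inequality produces $\int_\Omega(\mathbf{G}\mu)^p\,dx\le C\,(\mathcal{E}_\gamma[\mu])^{\frac{n}{n-2}}$, and taking $p$-th roots gives the claim, because the exponent collapses: $\frac{n}{(n-2)p}=\frac{1}{1+\gamma}$.

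The main obstacle is making the test-function step rigorous: $u=\mathbf{G}\mu$ is only $\mathcal{A}$-superharmonic and possibly unbounded, so $u^{\gamma}$ is not an admissible ($\mathcal{C}^\infty_0$ or $W^{1,2}_0$) test function, and the weighted integral $\int_\Omega u^{\gamma-1}|\nabla u|^2\,dx$ must be shown to be finite and well behaved both where $u$ is large and, when $0<\gamma<1$, where $u\to 0$. I would handle this by truncation and approximation, replacing $u$ by $\min(u,k)$ and/or approximating $\mu$ by measures with bounded potentials, establishing the identity (or the correct one-sided inequality) for the truncations, and passing to the limit by monotone convergence and Fatou's lemma; I would also verify that $v$, extended by zero, genuinely lies in $W^{1,2}(\R^n)$ so that the Sobolev inequality applies, using the boundary behaviour $\liminf_{x\to y}u(x)=0$ built into the Green potential. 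Once this approximation scheme is in place, the remaining estimates are routine.
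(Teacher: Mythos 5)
Your proposal is correct in substance, but it takes a genuinely different route from the paper's proof. The paper never touches gradients: it sets $w=(\mathbf{G}\mu)^{1-q}$, takes its Riesz measure $\omega$, bounds $\mathcal{E}_{\frac{\gamma+q}{1-q}}[\omega]$ by $\mathcal{E}_{\gamma}[\mu]$ via the iterated pointwise inequalities for potentials (Lemma \ref{mainlemma}), and then invokes the imported estimate of Lemma \ref{lemma2} to convert the $d\omega$-moment into the $L^{p}(\Omega,dx)$ norm; there the passage from measure-weighted to Lebesgue norms rests on kernel bounds for $G$ of Hardy--Littlewood--Sobolev type. You instead run the classical variational scheme: $v=u^{(1+\gamma)/2}$, Sobolev's inequality, and testing $\mathcal{L}u=\mu$ against $u^{\gamma}$, which for $\gamma=1$ is just $\int_\Omega|\nabla u|^{2}\,dx=\int_\Omega u\,d\mu$ plus Sobolev. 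Your exponent bookkeeping is exact, and your route makes the dependence of the constant on $\gamma$ and the ellipticity bound $m$ explicit, whereas the paper's route hides it inside cited lemmas; conversely, the paper's route avoids entirely the technical debt you flag, namely justifying integration by parts for an $\mathcal{A}$-superharmonic potential with measure data on an arbitrary domain.

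Two repairs to your sketch of that deferred step. First, membership of the zero extension of $v$ in the homogeneous Sobolev space should not be argued from the pointwise boundary condition $\liminf_{x\to y}u(x)=0$: Green potentials need not vanish pointwise at every boundary point, and pointwise decay does not by itself give weak differentiability across $\partial\Omega$. The robust route is to approximate $\mu$ from below by $\mu_{j}=\mu|_{K_{j}\cap\{u\le j\}}$ with $K_{j}\Subset\Omega$ exhausting $\Omega$: each $\mu_{j}$ has finite classical energy, its potential is bounded on $\Omega$ by the domination principle (since $\mathbf{G}\mu_{j}\le u\le j$ on $\operatorname{supp}\mu_{j}$), so $\mathbf{G}\mu_{j}$ and the relevant powers lie in $D^{1,2}_{0}(\Omega)$ by Dirichlet-space theory, and one concludes by monotone convergence since $\mathbf{G}\mu_{j}\uparrow\mathbf{G}\mu$. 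Second, observe that $\mathcal{E}_{\gamma}[\mu]<\infty$ forces $\mu$ to charge no set of capacity zero (otherwise $\mathbf{G}\mu=\infty$ on a set of positive $\mu$-measure), which is what legitimizes the pairing $\int_\Omega u^{\gamma}\,d\mu$ in the weak formulation. With these adjustments, only the one-sided inequality $\gamma\int_{\Omega}u^{\gamma-1}\,\mathcal{A}\nabla u\cdot\nabla u\,dx\le\mathcal{E}_{\gamma}[\mu]$ is needed, and that is exactly what the truncation argument yields; so your plan closes.
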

\begin{proof}
 Notice that $w:=(\mathbf{G}\mu)^{1-q}$ with $0<q<1$ is a positive $\mathcal{A}$-superharmonic function on $\Omega$ since $\mathbf{G}\mu\not\equiv +\infty,$ and  $w:=\mathbf{G}\omega,$ where $\omega\in\mathcal{M}^+(\Omega)$ is the Riesz measure of $w,$ see \cite{SV2}.

Applying the Lemma \ref{mainlemma} together with Lemma \ref{lemma2}, we get the desired estimate,
\[
\|\mathbf{G}\mu\|_{L^p(\Omega,dx)}= \big\|\mathbf{G}\omega\big\|^{\frac{1}{1-q}}_{L^{\frac{p}{1-q}}(\Omega,dx)}   \leq \tilde{C} \big\|\mathbf{G}\omega\big\|^{\frac{\gamma+q}{(\gamma+1)(1-q)}}_{L^{\frac{\gamma+q}{1-q}}(\Omega,d\omega)}\leq \tilde{C}C\big\|\mathbf{G}\mu\big\|_{L^\gamma(\Omega,d\mu)}^{\frac{\gamma}{\gamma+1}}.
\]
where $\tilde{C}$ and $C$ are constants in Lemma \ref{lemma2} and Lemma \ref{mainlemma} respectively.
\end{proof}
We are now ready to prove the main theorem of this work.
\begin{proof}[Proof of Theorem \ref{thm:classical}] 
Suppose that \eqref{greenpotentialwithsigma} and \eqref{greenpotentialwithmu} hold for  $\gamma=\frac{p(n-2)-n}{n}.$ The condition \eqref{greenpotwithsigma} is satisfied by Lemma \ref{lemma}. As a result, according to Theorem \ref{thm:solofinteq}, the integral equation
\[
u= \mathbf{G}(u^qd\sigma)+\mathbf{G}\mu\quad\text{in}\quad\Omega
\]
has a positive solution $u\in L^{\gamma+q}(\Omega,d\sigma).$ In order to get solution $u\in L^p(\Omega,dx),$
we combine Lemma \ref{weightedlemma}, Lemma \ref{lemma2} and Lemma \ref{lemmaformu}.
 Then, we find that
\begin{align*}
	\|u\|_{L^{p}(\Omega,dx)} 
	&\leq  \|\mathbf{G}(u^qd\sigma)\|_{L^{p}(\Omega,dx)} + \|\mathbf{G} \mu\|_{L^{p}(\Omega,dx)}\\
	&\leq  C\| \mathbf{G}\sigma\|_{L^{\frac{\gamma+q}{1-q}}(\Omega,d\sigma)} \; \|u^q\|_{L^{\frac{\gamma+q}{q}}(\R^n,\,d\sigma)} 
	+ c\Big(\int_{\Omega}(\mathbf{G}\mu)^\gamma \,d\mu\Big)^{\frac{1}{\gamma+1}}\\
	&= C\|u\|^{q}_{L^{\gamma+q}(\Omega,\,d\sigma)}  <+ \infty.
	\end{align*}
This shows that there exists a positive solution $u \in L^{p}(\Omega, dx)$ to \eqref{classicallaplacian}
\end{proof}

We finish this paper by providing a proof of Corollary \ref{corollary}.  The following proof is mainly influenced by Seesanea and Verbitsky \cite{SV2} and by Boccardo and Orsina \cite{BO}.
\begin{proof}[Proof of Corollary \ref{corollary}] 
  Setting $\gamma=\frac{p(n-2)-n}{n} $. Then $s_1=\frac{np}{n(1-q)+2p}>1$. By H\"{o}lder inequality,
  \be \label{equation}
  \int_\Omega \big(\mathbf{G}\sigma \big)^{\frac{\gamma+q}{1-q}}d\sigma \leq \big\| \mathbf{G}\sigma\big\|^{\frac{\gamma+q}{1-q}} _{L^{(\frac{\gamma+q}{1-q})s'_1}(\Omega,dx) } \| \sigma \|_{L^{s_1}(\Omega,dx)},
  \ee
  where $s'_1=\frac{np}{p(n-2)-n(1-q)}$ is the conjugate of $s_1$.
  We see that
  \[
  \frac{1}{s_1}+\frac{1}{\big(\frac{\gamma+q}{1-q}\big)s'_1}=\frac{2}{n}.
  \]
Appealing to Hardy-Littlewood-Sobolev inequality,
\be\label{equationone}
\begin{split}
\big\| \mathbf{G}\sigma\big\| _{L^{ \big(\frac{\gamma+q}{1-q}\big) s'_1}(\Omega,dx)}\lesssim \big\| \mathbf{G} \tilde{\sigma}\big\|_{L^{ \big(\frac{\gamma+q}{1-q}\big) s'_1}(\mathbb{R}^n,dx)} &\lesssim \|\tilde{\sigma}\|_{L^{s_1}(\mathbb{R}^n,dx)}\\ 
&=\|\sigma\| _{L^{s_1}(\Omega,dx)}.
\end{split}
\ee
Here, $\tilde{\sigma}$ is the zero extension of $\sigma$ to $\mathbb{R}^n$.
Thus, by \eqref{equation} and \eqref{equationone},
\[
\big\| \mathbf{G}\sigma\big\|_{ L^{ \frac{\gamma+q}{1-q} }(\Omega,d\sigma) }
\leq \Big( \big\| \sigma  \big\|^{\frac{\gamma+q}{1-q}+1}_{L^{s_1}(\Omega,dx)}\Big)^{\frac{1-q}{\gamma+q}}
= \big\| \sigma  \big\|^{\frac{\gamma+1}{\gamma+q}}_{L^{s_1}(\Omega,dx)}
<+\infty
\]
Hence, \eqref{greenpotentialwithsigma} is valid.
Similarly, we note that $s_2=\frac{np}{n+2p}>1$. By H\"{o}lder inequality,
\be \label{equationtwo}
\int_\Omega (\mathbf{G}\mu)^\gamma d\mu \leq \| \mathbf{G}\mu\|^{\gamma}_{ L^{\gamma s'_2}(\Omega,dx) } \|\mu\|_{L^{s_2}(\Omega,dx)}
\ee
where $s'_2=\frac{s_2}{s_2 -1}$. We see that
\[
\frac{1}{s_2}+\frac{1}{\gamma s'_2}=\frac{2}{n}.
\]
Taking  Hardy-Littlewood-Sobolev inequality,
\be\label{equationthree}
\| \mathbf{G}\mu\| _{L^{ \gamma s'_2}(\Omega,dx)}\lesssim \| \mathbf{G}  \tilde{\mu}\|_{L^{ \gamma s'_2}(\mathbb{R}^n,dx)}\lesssim \|\tilde{\mu}\|_{L^{s_2}(\mathbb{R}^n,dx)}=\|\mu\| _{L^{s_2}(\Omega,dx)},
\ee
where $\tilde{\mu}$ is the zero extension of $\mu$ to $\mathbb{R}^n$.
Thus, by \eqref{equationtwo} and \eqref{equationthree},
\[
\| \mathbf{G}\mu\|_{L^\gamma (\Omega,d\mu)}\leq \big( \|\mu\|^{\gamma+1}_{L^{\gamma}(\Omega,dx)} \big)^{\frac{1}{\gamma}}<+\infty.
\]
Therefore,  \eqref{greenpotentialwithmu} is fulfilled. Consequently, Theorem \ref{thm:classical} yields the existence of 
the minimal positive solution $u \in L^{p}(\Omega, dx)$ to \eqref{classicallaplacian}.
\end{proof}


\section*{Acknowledgments} This study was supported by Thammasat University Research Fund, Contract No. TUFT 52/2566. A.C.M.  gratefully acknowledges financial support from the Excellent Foreign Student (EFS) scholarship,  Sirindhorn International Institute of Technology (SIIT),  Thammasat University.


\bibliographystyle{abbrv} 
\bibliography{reference(MS1)}

\begin{thebibliography}{10}

\bibitem{BB}
P.~B{\'e}nilan and H.~Brezis.
\newblock Nonlinear problems related to the {Thomas}-{Fermi} equation.
\newblock {\em J. Evol. Equ.}, 3(4):673--770, 2003.

\bibitem{BBC}
P.~Benilan, H.~Br{\'e}zis, and M.~G. Crandall.
\newblock A semilinear equation in {{\(L^1(\mathbb R^N)\)}}.
\newblock {\em Ann. Sc. Norm. Super. Pisa, Cl. Sci., IV. Ser.}, 2:523--555,
  1975.

\bibitem{BO}
L.~Boccardo and L.~Orsina.
\newblock Sublinear equations in {{\(L^ s\)}}.
\newblock {\em Houston J. Math.}, 20(1):99--114, 1994.

\bibitem{BK}
H.~Br{\'e}zis and S.~Kamin.
\newblock Sublinear elliptic equations in {{\(\mathbb{R}{}^ n\)}}.
\newblock {\em Manuscr. Math.}, 74(1):87--106, 1992.

\bibitem{CV2}
D.~Cao and I.~Verbitsky.
\newblock Nonlinear elliptic equations and intrinsic potentials of {Wolff}
  type.
\newblock {\em J. Funct. Anal.}, 272(1):112--165, 2017.

\bibitem{GV}
A.~Grigor'yan and I.~Verbitsky.
\newblock Pointwise estimates of solutions to nonlinear equations for nonlocal
  operators.
\newblock {\em Ann. Sc. Norm. Super. Pisa, Cl. Sci. (5)}, 20(2):721--750, 2020.

\bibitem{KHM}
T.~Kilpeläinen, J.~Heinonen, and O.~Martio.
\newblock {\em Nonlinear Potential Theory of Degenerate Elliptic Equations}.
\newblock 01 2006.

\bibitem{QV}
S.~Quinn and I.~E. Verbitsky.
\newblock A sublinear version of {Schur}'s lemma and elliptic {PDE}.
\newblock {\em Anal. PDE}, 11(2):439--466, 2018.

\bibitem{SV2}
A.~Seesanea and I.~E. Verbitsky.
\newblock Solutions to sublinear elliptic equations with finite generalized
  energy.
\newblock {\em Calc. Var. Partial Differ. Equ.}, 58(1):21, 2019.
\newblock Id/No 6.

\bibitem{SV1}
A.~Seesanea and I.~E. Verbitsky.
\newblock Finite energy solutions to inhomogeneous nonlinear elliptic equations
  with sub-natural growth terms.
\newblock {\em Adv. Calc. Var.}, 13(1):53--74, 2020.

\bibitem{SV3}
A.~Seesanea and I.~E. Verbitsky.
\newblock Solutions in {Lebesgue} spaces to nonlinear elliptic equations with
  subnatural growth terms.
\newblock {\em St. Petersbg. Math. J.}, 31(3):557--572, 2020.

\bibitem{V1}
I.~E. Verbitsky.
\newblock Sublinear equations and {Schur}'s test for integral operators.
\newblock In {\em 50 years with Hardy spaces. A tribute to Victor Havin}, pages
  467--484. Cham: Birkh{\"a}user, 2018.

\bibitem{V4}
I.~E. Verbitsky.
\newblock Global pointwise estimates of positive solutions to sublinear
  equations.
\newblock {\em St. Petersbg. Math. J.}, 34(3):531--556, 2023.

\bibitem{Ve}
L.~V{\'e}ron.
\newblock Elliptic equations involving measures.
\newblock In {\em Handbook of differential equations: Stationary partial
  differential equations. Vol. I.}, pages 593--712. Amsterdam:
  Elsevier/North-Holland, 2004.

\end{thebibliography}
\end{document}